
%
\documentclass[preprint,11pt]{elsarticle}

\topmargin -0.5in \textwidth 6.5in \textheight 9in \oddsidemargin 0in \evensidemargin 0in

\usepackage{times,amsmath}
\usepackage{amsthm}
\usepackage{amsmath,algorithm2e,bm}
\usepackage{amsmath,amssymb,amsfonts,amscd}
\usepackage{graphicx,latexsym,color}
\usepackage{hyperref}
\usepackage{multirow}
\usepackage{floatrow}
\usepackage{wrapfig}
\usepackage{verbatim}
\usepackage{epsfig}
\usepackage{lineno,hyperref}
\usepackage{mathrsfs}
\usepackage{xcolor}
\usepackage{natbib}
\usepackage{enumitem}
\biboptions{sort&compress}

 \usepackage{lineno}
\numberwithin{equation}{section}
\newproof{pf}{Proof}

\newtheorem{thm}{Theorem}[section]
\newtheorem{lem}[thm]{Lemma}

\newtheorem{remark}[thm]{Remark}
\def \bes{\begin{eqnarray}}
\def \ees{\end{eqnarray}}
\def \bns{\begin{eqnarray*}}
\def \ens{\end{eqnarray*}}

\newcommand{\dif}{\,\mathrm{d}}
\newenvironment{eqa}{\begin{equation}%
  \begin{array}{rcl}}{\end{array}\end{equation}}
\newcommand\beqa{\begin{eqa}}
\newcommand\eeqa{\end{eqa}}
\parindent=2em
\parskip=0.05cm
\newcommand{\re}[1]{\mbox{$($\ref{#1}$)$}}\baselineskip 1pt


\newcommand{\p}{\partial}

\journal{Journal of Differential Equations }

\usepackage{subfig}

\begin{document}

\begin{frontmatter}


\author[label3]{Xinyue Evelyn Zhao}
 \ead{xzhao45@utk.edu}
  \affiliation[label1]{organization={Department of Mathematics, University of Tennessee},
	city={Knoxville},
	postcode={TN 37916},
	country={USA}}

\author[label2]{Junping Shi\corref{cor1}}
\ead{jxshix@wn.edu}
  \cortext[cor1]{Corresponding author}
  \affiliation[label2]{organization={Department of Mathematics, William $\&$ Mary},
 	city={Williamsburg},
 	postcode={VA 23187-8795},
 	country={USA}}

\title{On determination of the bifurcation type for a free boundary problem modeling tumor growth}

\begin{abstract}
    Many mathematical models in different disciplines involve the formulation of free boundary problems, where the domain boundaries are not predefined. These models present unique challenges, notably the nonlinear coupling between the solution and the boundary, which complicates the identification of bifurcation types. This paper mainly investigates the structure of symmetry-breaking bifurcations in a two-dimensional free boundary problem modeling tumor growth. By expanding the solution to a high order with respect to a small parameter and computing the bifurcation direction at each bifurcation point, we demonstrate that all the symmetry-breaking bifurcations occurred in the model, as established by the Crandall-Rabinowitz Bifurcation From Simple Eigenvalue Theorem, are pitchfork bifurcations. These findings reveal distinct behaviors between the two-dimensional and three-dimensional cases of the same model.
\end{abstract}

\begin{keyword}



\end{keyword}

\end{frontmatter}


\section{Introduction}
In recent decades, an increasing number of PDE models describing solid tumor growth in the form of free boundary problems have been proposed and studied, see \cite{bazaliy2003global,bazaliy2003free,cui2009well,cui2007bifurcation,fontelos2003symmetry,friedman2007mathematical,friedman2006bifurcation,friedman2006asymptotic,friedman2008stability,friedman1999analysis,friedman2001symmetry} and reference therein. These models, which consider the tumor tissue as a density of proliferating cells, are based on reaction-diffusion equations and mass conservation law for cell density and nutrient concentration within the tumor. The influences of different factors on tumor growth are investigated, such as the effect of angiogenesis \cite{FFH1,angio2,angio1,huang2021asymptotic}, time delay \cite{he2022linear,he2022thelinear,zhao2020impact,zhao2020symmetry}, inhibitor \cite{cui2002analysis,wang2014bifurcation}, cell cycle \cite{friedman2007mathematical, friedman2008multiscale, friedman2009free, friedman2010cell}, necrotic core \cite{cui2001analysis,hao2012bifurcation,lu2023bifurcation}, and so on. A systematic survey of tumor model studies was provided in \cite{lowengrub2}.

Let $\sigma$ and $p$ denote the concentration of nutrients and the pressure, respectively. The basic tumor growth model is to find the unknown tumor region $\Omega(t)$ (or the tumor boundary $\p \Omega(t)$) and the unknown functions $\sigma(x,t)$ and $p(x,t)$ satisfying
\begin{align}
    \label{model1}\sigma_t - \Delta \sigma + \sigma = 0 \hspace{2em}&\text{in }\Omega(t),\; t>0,\\
    \label{model2}\Delta p = -\mu(\sigma - \widetilde{\sigma}) \hspace{2em}&\text{in }\Omega(t),\; t>0,\\
    \label{model3}\sigma=1 \hspace{2em} &\text{on }\partial \Omega(t),\; t>0,\\
    \label{model4}p=\kappa \hspace{2em} &\text{on }\partial \Omega(t),
\end{align}
where $\kappa$ is the mean curvature, and
\begin{equation}
    \label{model5}V_n = -\frac{\p p}{\p n} \hspace{2em}\text{on }\partial \Omega(t),
\end{equation}
where $\frac{\p}{\p n}$ is the derivative along the outward normal $\vec{n}$ and $V_n$ is the velocity of the free boundary $\p \Omega(t)$ in the outward normal direction $\vec{n}$. The initial conditions are
\begin{equation}
    \label{model6} \sigma(x,0) = \sigma_0(x) \hspace{2em}\text{in }\Omega(0)\text{, \,where $\Omega(0)$ is given.}
\end{equation}

In the basic model \re{model1} -- \re{model6}, it is assumed that the tumor region $\Omega(t)$ contains just one type of cell, and the cell density is uniform. The tumor will either expand or shrink depending on the amount of nutrients within the tumor, and the tumor proliferation rate is assumed to be linear with respect to the concentration of nutrients, given by the function $\mu(\sigma-\widetilde{\sigma})$. Here, $\mu$ is a parameter expressing the ``intensity" of tumor expansion due to mitosis (if $\sigma > \widetilde{\sigma})$ or tumor shrinkage by apoptosis (if $\sigma < \widetilde{\sigma})$, and $\widetilde{\sigma}$ is a threshold concentration. In addition, it is assumed that the tumor region is a porous medium, so that Darcy's law $\vec{V}=-\nabla p$ holds. Combining it with the law of conservation of mass $\text{div} \vec{V} = \mu(\sigma-\widetilde{\sigma})$, we derive the equation \re{model2}. Furthermore, the boundary condition for the pressure $p$, i.e., the equation \re{model4}, is due to cell-to-cell adhesiveness, and the continuity of the velocity field $\vec{V}\cdot\vec{n} = V_n$ yields the relation \re{model5}.

It is well-established that under the assumption $0<\widetilde{\sigma}<1$, the system \re{model1} -- \re{model6} admits a unique radially symmetric stationary solution for both the 2-D \cite{friedman2001symmetry} and 3-D \cite{friedman1999analysis} cases. It was also proved in \cite{friedman2001symmetry} for the 2-D case and \cite{fontelos2003symmetry,friedman2008stability} for the 3-D case that there exists a sequence of symmetry-breaking bifurcation branches consisting non-symmetric stationary solutions that bifurcate from the branch of the unique radially symmetric stationary solution with parameter $\mu$. Similar bifurcation results in various tumor growth models have been obtained in \cite{cui2007bifurcation,hao2012bifurcation,lu2023bifurcation,wang2014bifurcation,zhao2020symmetry}. Yet, the structure of these bifurcations remains largely unexplored. It was demonstrated in \cite{friedman2008stability} that the first symmetry-breaking bifurcation of model \re{model1} -- \re{model6} in 3-D case is a transcritical bifurcation. In contrast, our findings in this paper reveal a distinct behavior in the 2-D version of the same model. \textit{We will prove that all symmetry-breaking bifurcations for model \re{model1} -- \re{model6} in 2-D are pitchfork bifurcations.} The methods in this paper can be applied to analyze tumor growth models with different factors proposed in \cite{cui2002analysis,cui2001analysis,FFH1,angio2,hao2012bifurcation,he2022linear,he2022thelinear,angio1,huang2021asymptotic,lu2023bifurcation,wang2014bifurcation,zhao2020impact,zhao2020symmetry}. Although the computations can be more complicated, similar bifurcation results could be obtained. This finding is consistent with the numerical simulations in \cite{hao2012bifurcation,hao2012continuation}.

The structure of this paper is as follows: Section 2 presents preliminary results. In Section 3, we prove the existence of bifurcation of non-radially symmetric solutions. Section 4 is dedicated to deriving the type of bifurcations and proving that all bifurcation points are pitchfork bifurcations. Finally, in Section 5, we provide a discussion of our results.

\section{Preliminaries}

\subsection{Radially symmetric stationary solution.} Consider \re{model1} -- \re{model6} in the two-dimensional spatial domain. We denote the radially symmetric stationary solution of \re{model1} -- \re{model6} by $(\sigma_S(r),p_S(r),\p B_{R_S})$, where $B_{R_S}$ denotes the disk centered at 0 with radius $R_S$. From \re{model1} -- \re{model6}, the solution satisfies
\begin{eqnarray}
    \label{rss1}-\Delta \sigma_S + \sigma_S = 0 \hspace{2em}&0<r<R_S,\\
    \label{rss2}\Delta p_S = -\mu(\sigma_S-\widetilde{\sigma}) \hspace{2em}&0<r<R_S,\\
    \label{rss3}\sigma_S = 1\hspace{2em} &r=R_S,\\
    \label{rss4}p_S = \frac{1}{R_S} \hspace{2em} & r=R_S,\\
    \label{rss5}V_n = -\frac{\p p_S}{\p r} = 0 \hspace{2em} &r=R_S.
\end{eqnarray}
The system has now been reduced to an ODE system, and the explicit solution is given by
\begin{align}
    \label{sigmaS}\sigma_S(r) &= \frac{I_0(r)}{I_0(R_S)},\\
    \label{pS} p_S(r) &= \frac14\mu\widetilde{\sigma} r^2 - \mu\frac{I_0(r)}{I_0(R_S)} + \frac{1}{R_S} + \mu - \frac{1}{4}\mu\widetilde{\sigma}R_S^2,
\end{align}
where $R_S$ is uniquely determined by the equation
\begin{equation}
    \label{RS}\frac{\widetilde{\sigma}}{2} = \frac{I_1(R_S)}{R_S I_0(R_S)}.
\end{equation}
In \re{sigmaS} -- \re{RS}, the functions $I_n(r)$, where $n$ are non-negative integers, represent the modified Bessel functions of the first kind. For convenience, we collect some properties of these functions in Subsection 2.2.

{\color{black} Later on, we will use $\mu$ as the bifurcation parameter, which explicitly appears in the formula of $p_S$. To avoid the dependence of this particular solution $(\sigma_S,p_S,R_S)$ on the bifurcation parameter $\mu$, we decompose $p_S$ as
$$p_S(r) = \widetilde{p}_S(r) + \mu p_S^*(r),$$
where $\widetilde{p}_S$ and $p_S^*$ are defined to satisfy the following boundary value problems, respectively:
\begin{equation}\label{eqn:pst}
    \left\{
\begin{aligned}
    &-\Delta \widetilde{p}_S = 0 \hspace{2em} & 0<r<R_S,\\
    &\widetilde{p_S} = \frac{1}{R_S} \hspace{2em} & r=R_S,
\end{aligned}\right.
\end{equation}
\begin{equation}\label{eqn:pss}
    \left\{
\begin{aligned}
    &-\Delta p_S^* =\sigma_S-\widetilde{\sigma} \hspace{2em} & 0<r<R_S,\\
    &p_S^* = 0 \hspace{2em} & r=R_S.
\end{aligned}\right.
\end{equation}
These equations can be solved to obtain:
\begin{eqnarray}
    &&\widetilde{p}_S(r) = \frac{1}{R_S},\label{pst}\\
    &&p_S^*(r) =\frac14 \widetilde{\sigma} r^2 - \frac{I_0(r)}{I_0(R_S)} + 1 - \frac14 \widetilde{\sigma} R_S^2.\label{pss}
\end{eqnarray}
}

In later computations, we will use the information of $\frac{\p \sigma_S(R_S)}{\p r}$, $\frac{\p^2 \sigma_S(R_S)}{\p r^2}$, $\frac{\p \widetilde{p}_S(R_S)}{\p r}$, $\frac{\p p^*_S(R_S)}{\p r}$, $\frac{\p^2 \widetilde{p}_S(R_S)}{\p r^2}$, $\frac{\p^2 p^*_S(R_S)}{\p r^2}$, $\frac{\p^3 \widetilde{p}_S(R_S)}{\p r^3}$, and $\frac{\p^3 p^*_S(R_S)}{\p r^3}$. We put them in the following lemma:
\begin{lem}\label{lem:deri}
For the radially symmetric stationary solution $(\sigma_S,\widetilde{p}_S,p^*_S,\p B_{R_S})$, we have
    \begin{align}
        \label{ds1}&\frac{\p \sigma_S(R_S)}{\p r} = \frac{I_1(R_S)}{I_0(R_S)},\\
        \label{ds2}&\frac{\p^2 \sigma_S(R_S)}{\p r^2} = 1-\frac{I_1(R_S)}{R_S I_0(R_S)},\\
        \label{dpt} &\frac{\p \widetilde{p}_S(R_S)}{\p r} = \frac{\p^2 \widetilde{p}_S(R_S)}{\p r^2} =  \frac{\p^3 \widetilde{p}_S(R_S)}{\p r^3} =0,
    \end{align}
and, by \re{pS}, we also have
\begin{align}
    \label{dps1}&\frac{\p p^*_S(R_S)}{\p r} = \frac{\widetilde{\sigma}}{2}R_S - \frac{I_1(R_S)}{I_0(R_S)}=0,\\
        \label{dps2}&\frac{\p^2 p^*_S(R_S)}{\p r^2} = \frac{\widetilde{\sigma}}{2} -\Big(1-\frac{I_1(R_S)}{R_SI_0(R_S)}\Big) = \frac{2I_1(R_S)}{R_S I_0(R_S)}-1,\\
        \label{dps3}&\frac{\p^3 p^*_S(R_S)}{\p r^3}= \frac{1}{R_S} - \frac{(2+R_S^2)I_1(R_S)}{R_S^2 I_0(R_S)}.
\end{align}
\end{lem}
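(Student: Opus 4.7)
The plan is to verify each identity by direct differentiation of the explicit formulas \re{sigmaS}, \re{pst}, \re{pss}, combined with the standard recurrences for modified Bessel functions (to be recorded in Subsection 2.2), namely $I_0'(r) = I_1(r)$ and $I_1'(r) = I_0(r) - I_1(r)/r$. The defining relation \re{RS} for $R_S$ will then be substituted at several points to reach the stated normal forms.

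First I would dispose of \re{dpt} immediately: since \re{pst} shows that $\widetilde{p}_S(r) = 1/R_S$ is independent of $r$, every $r$-derivative of $\widetilde{p}_S$ vanishes. Next, for $\sigma_S$ I would apply $I_0' = I_1$ once to \re{sigmaS} to obtain \re{ds1}, and apply it a second time together with $I_1'(r) = I_0(r) - I_1(r)/r$ and evaluation at $r=R_S$ to obtain \re{ds2}.

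The remaining three identities concern $p_S^*$. Differentiating \re{pss} once yields $(p_S^*)'(r) = \widetilde{\sigma}r/2 - I_1(r)/I_0(R_S)$; evaluating at $r = R_S$ gives $\widetilde{\sigma}R_S/2 - I_1(R_S)/I_0(R_S)$, which is precisely zero by \re{RS}, establishing \re{dps1}. Differentiating a second time and using $I_1' = I_0 - I_1/r$, then substituting \re{RS} once more to trade $\widetilde{\sigma}/2$ for $I_1(R_S)/(R_S I_0(R_S))$, yields both forms of \re{dps2}.

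The only step requiring genuine care is the third derivative needed for \re{dps3}. I would first derive $I_0'''(r) = I_1(r) - I_0(r)/r + 2 I_1(r)/r^2$ by differentiating $I_1'(r) = I_0(r) - I_1(r)/r$ and substituting the lower-order identities, then compute $(p_S^*)'''(R_S) = -I_0'''(R_S)/I_0(R_S)$ to reach the stated formula. No portion of the proof is truly hard; the main (mild) obstacle is keeping track of the $1/r$ factors correctly when differentiating $I_1(r)/r$, and remembering to invoke \re{RS} in the right places so the final expressions appear in the compact form displayed in the statement.
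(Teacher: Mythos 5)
Your proposal is correct and follows the only natural route: direct differentiation of the explicit formulas \re{sigmaS}, \re{pst}, \re{pss}, the Bessel recurrences $I_0'=I_1$ and $I_1'(r)=I_0(r)-I_1(r)/r$ (special cases of \re{bessel3}--\re{bessel4}), and substitution of \re{RS}. The paper does not spell out a proof of this lemma, but your verification of all six identities — including the third-derivative computation for \re{dps3} via $I_1''(r)=I_1(r)-I_0(r)/r+2I_1(r)/r^2$ — checks out and matches what the authors clearly have in mind.
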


\subsection{Properties of Bessel Functions}
Recall that the modified Bessel function of the first kind $I_n(\xi)$ 
satisfies the differential equation
\begin{equation}\label{bessel1}
    I''_n(\xi) + \frac{1}{\xi}I'_n(\xi) - \bigg(1+\frac{n^2}{\xi^2}\bigg) I_n(\xi) = 0\hspace{2em}\xi>0,
\end{equation}
and is given by
\begin{equation}
    \label{bessel2}
    I_n(\xi) = \bigg(\frac{\xi}{2}\bigg)^n \sum_{k=0}^\infty \frac{1}{k!\Gamma(n+k+1)}\bigg(\frac{\xi}{2}\bigg)^{2k},
\end{equation}
from which it is easy to derive
\begin{equation}
    \label{besseln}
\frac{I_{n+1}(\xi)}{I_n(\xi)} < \frac{\xi}{2n} , \hspace{2em} \mbox{for } \xi>0, \; n\ge 1.
\end{equation}
Furthermore, by \cite{fontelos2003symmetry, friedman2006bifurcation, friedman2001symmetry}, $I_n(\xi)$ satisfies the following properties, which are needed in subsequent computations.
\begin{gather}
I'_n(\xi) + \frac{n}{\xi}I_n(\xi) = I_{n-1}(\xi) \hspace{2em} n\ge 1, \label{bessel3}\\
I'_n(\xi) - \frac{n}{\xi}I_n(\xi) = I_{n+1}(\xi) \hspace{2em} n\ge 0, \label{bessel4}\\
\xi^{n+1} I_n(\xi) = \frac{\p}{\p \xi}(\xi^{n+1} I_{n+1}(\xi)) \hspace{2em} n\ge 0,\label{bessel5}\\
I_{n-1}(\xi) - I_{n+1}(\xi) = \frac{2n}{\xi}I_n(\xi) \hspace{2em} n\ge 1, \label{bessel6}\\
\frac{I_n(\xi)}{\xi} \text{ is increasing in $\xi$ for } \xi>0 \hspace{2em} n\ge 1.\label{bessel7}
\end{gather}

\subsection{Bifurcation theory} In this subsection, we state some abstract bifurcation theorems which are critical in our analysis:
\begin{thm}\label{CRthm}  {\bf (Crandall-Rabinowitz Theorem, \cite{crandall1971bifurcation,Liu2007,nirenberg1974topics})}
Let $X$, $Y$ be real Banach spaces and let $\mathcal{F}(\cdot,\cdot)$ be a $C^p$ map, $p\ge 2$, of a neighborhood $(\mu_0,0)$ in $\mathbb{R}\times X$ into $Y$. Denote by $\mathcal{F}_x$ and $\mathcal{F}_{\mu x}$ the first- and second-order Fr\'echet derivatives, respectively.
Assume the following four conditions hold:
\begin{itemize}
\item[(I)] $\mathcal{F}(\mu,0) = 0$ for all $\mu$ in a neighborhood of $\mu_0$,
\item[(II)] $\mathrm{Ker} \,\mathcal{F}_x(\mu_0,0)$ is one dimensional space, spanned by $x_0$,
\item[(III)] $\mathrm{Im} \, \mathcal{F}_x(\mu_0,0)=Y_1$ has codimension one,
\item[(IV)] $\mathcal{F}_{\mu x} (\mu_0,0) [x_0] \notin Y_1$,
\end{itemize}
then $(\mu,x)=(\mu_0,0)$ is a bifurcation point of the equation $\mathcal{F}(\mu,x)=0$ in the following sense: in a neighborhood of $(\mu,x)=(\mu_0,0)$, the set of solutions to $\mathcal{F}(\mu,x) =0$ consists of two $C^{p-1}$ smooth curves, $\Gamma_1$ and $\Gamma_2$, which intersect only at the point $(\mu_0,x)=(\mu_0,0)$; $\Gamma_1$ is the curve $x\equiv 0$, and $\Gamma_2$ can be parameterized as follows:
$$\Gamma_2=(\mu(\varepsilon),x(\varepsilon)): |\varepsilon| \text{ small, } (\mu(0),x(0))=(\mu_0,0),\; x'(0)=x_0.$$
\end{thm}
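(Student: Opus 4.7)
The plan is to use the Lyapunov--Schmidt reduction, which converts the infinite-dimensional equation $\mathcal{F}(\mu,x)=0$ into a scalar equation amenable to the classical implicit function theorem. First, I would use conditions (II) and (III) to obtain topological direct sum decompositions
\[
X = \mathrm{span}\{x_0\} \oplus X_1, \qquad Y = Y_1 \oplus \mathrm{span}\{y_0\},
\]
where $X_1$ is a closed complement of the one-dimensional kernel and $y_0 \notin Y_1$ spans a complement of the closed subspace $Y_1$. Let $Q: Y \to \mathrm{span}\{y_0\}$ be the continuous projection associated with this splitting. Writing every $x \in X$ near the origin as $x = \varepsilon x_0 + z$ with $\varepsilon \in \mathbb{R}$ and $z \in X_1$, the equation $\mathcal{F}(\mu,x)=0$ splits into
\[
(I-Q)\,\mathcal{F}(\mu,\varepsilon x_0 + z) = 0, \qquad Q\,\mathcal{F}(\mu,\varepsilon x_0 + z) = 0.
\]

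Next I would apply the implicit function theorem to the first component. Set $G(\mu,\varepsilon,z) = (I-Q)\mathcal{F}(\mu,\varepsilon x_0 + z)$; then $G(\mu_0,0,0)=0$ and $G_z(\mu_0,0,0) = (I-Q)\mathcal{F}_x(\mu_0,0)\big|_{X_1}$. By (II) and (III), the restriction $\mathcal{F}_x(\mu_0,0)\big|_{X_1}: X_1 \to Y_1$ is a continuous bijection, and the open mapping theorem upgrades it to an isomorphism. The implicit function theorem then produces a unique $C^p$ function $z = z(\mu,\varepsilon)$ defined on a neighborhood of $(\mu_0,0)$ with $z(\mu_0,0)=0$. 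Since $\mathcal{F}(\mu,0)=0$ by (I), uniqueness forces $z(\mu,0) \equiv 0$ in a neighborhood of $\mu_0$.

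Substituting into the second component yields a scalar bifurcation equation $\Phi(\mu,\varepsilon)\,y_0 := Q\,\mathcal{F}(\mu,\varepsilon x_0 + z(\mu,\varepsilon))$. Because $\Phi(\mu,0) \equiv 0$, I would factor $\Phi(\mu,\varepsilon) = \varepsilon\,\Psi(\mu,\varepsilon)$ via the integral representation $\Psi(\mu,\varepsilon) = \int_0^1 \Phi_\varepsilon(\mu,t\varepsilon)\,dt$, which is $C^{p-1}$. A short computation using $z_\varepsilon(\mu_0,0)=0$, obtained by implicit differentiation of $G(\mu,\varepsilon,z(\mu,\varepsilon))=0$ combined with $\mathcal{F}_x(\mu_0,0)[x_0]=0$, gives
\[
\Psi(\mu_0,0)\,y_0 = Q\,\mathcal{F}_x(\mu_0,0)[x_0] = 0, \qquad \Psi_\mu(\mu_0,0)\,y_0 = Q\,\mathcal{F}_{\mu x}(\mu_0,0)[x_0].
\]
Condition (IV) now guarantees the right-hand side of the second identity is nonzero, so $\Psi_\mu(\mu_0,0) \neq 0$.

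Finally, one more application of the implicit function theorem to $\Psi(\mu,\varepsilon)=0$ yields a $C^{p-1}$ function $\mu = \mu(\varepsilon)$ with $\mu(0)=\mu_0$, producing the bifurcating branch $\Gamma_2 = \{(\mu(\varepsilon),\,\varepsilon x_0 + z(\mu(\varepsilon),\varepsilon)) : |\varepsilon| \text{ small}\}$, which satisfies $x'(0)=x_0$. Together with the trivial branch $\Gamma_1 = \{(\mu,0)\}$, this exhausts the local zero set of $\mathcal{F}$ near $(\mu_0,0)$. The main technical obstacle is the clean factorization $\Phi(\mu,\varepsilon) = \varepsilon\,\Psi(\mu,\varepsilon)$ and the translation of condition (IV) into the nondegeneracy $\Psi_\mu(\mu_0,0) \neq 0$; this step is precisely where simplicity of the eigenvalue, codimension one of the image, and the transversality hypothesis combine, and without (IV) the reduced scalar equation would be degenerate and fail to parameterize a smooth branch by $\varepsilon$.
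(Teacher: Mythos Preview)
The paper does not prove this theorem itself; it is stated as a cited result from \cite{crandall1971bifurcation,Liu2007,nirenberg1974topics}, with only a short remark afterward noting that the $C^{p-1}$ regularity of the solution curves follows from \cite{Liu2007}. Your proposal is the standard Lyapunov--Schmidt reduction argument, which is precisely the method used in the original references (Crandall--Rabinowitz and Nirenberg), so in that sense your approach matches the one the paper defers to. The outline is correct and complete: the splitting via (II)--(III), the implicit-function solution of the $Y_1$-component, the factorization $\Phi=\varepsilon\Psi$ via the integral formula, and the use of (IV) to obtain $\Psi_\mu(\mu_0,0)\neq 0$ are exactly the steps that drive the classical proof.
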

Theorem \ref{CRthm} was first proved in \cite{crandall1971bifurcation} for $C^1$ maps with continuous $\mathcal{F}_{\mu x}$. The version here with $C^p$ ($p\ge 2$) maps was stated in \cite{nirenberg1974topics} with $C^{p-2}$ solution curves. Indeed, the solution curves are $C^{p-1}$ as pointed out in \cite{Liu2007} (see Corollary 2.3 and Lemma 2.5).

While the Crandall-Rabinowitz Theorem reveals the locations where bifurcation branches occur, it does not provide information on the direction of bifurcation branches, which is important in determining the type of bifurcation diagrams. To investigate the type of bifurcation, we need the following theorem:

\begin{thm}{(\cite{Liu2007,shi1999persistence})}\label{shi}
  Suppose the conditions of Theorem \ref{CRthm} are satisfied and let $(\mu_0,0)$ be the bifurcation point of $\mathcal{F}(\mu,x)=0$ in Theorem \ref{CRthm}. Along the bifurcation branch $\Gamma_2$, we have
    \begin{equation}
        \label{mu-d}
        \mu'(0) = -\frac{\langle l,\mathcal{F}_{xx}(\mu_0,0)[x_0,x_0]\rangle}{2\langle l,\mathcal{F}_{\mu x}(\mu_0,0)[x_0]\rangle},
    \end{equation}
    where $l\in Y^*$ satisfying $\text{Ker}\, l = \text{Im}\, \mathcal{F}_x(\mu_0,0)$, and $\langle \cdot, \cdot\rangle$ is the the duality pair of $Y^*$ and $Y$. If $\mu'(0)\neq 0$, which indicates $\mathcal{F}_{xx}(\mu_0,0)[x_0,x_0] \notin \text{Im}\,\mathcal{F}_x(\mu_0,0)$, then the bifurcation branch $\Gamma_2$ exhibit a transcritical bifurcation. On the other hand, if $\mu'(0)=0$, then $\mathcal{F}_{xx}(\mu_0,0)$ $[x_0,x_0] \in \text{Im}\,\mathcal{F}_x(\mu_0,0)$, and the bifurcation is a pitchfork type. Furthermore, in the case of a pitchfork bifurcation and assuming $p\ge 3$, the direction of bifurcation at $(\mu_0,0)$ is determined  by
    \begin{equation}\label{eqn:mu-d2}
        \mu''(0) = -\frac{\langle l,\mathcal{F}_{xxx}(\mu_0,0)[x_0,x_0,x_0]\rangle + 3\langle l,\mathcal{F}_{xx}(\mu_0,0)[x_0,\phi]\rangle}{3\langle l,\mathcal{F}_{\mu x}(\mu_0,0)[x_0]\rangle},
    \end{equation}
    where $\phi$ is the solution of
    \begin{equation}\label{eqn:phi}
        \mathcal{F}_{xx}(\mu_0,0)[x_0,x_0] + \mathcal{F}_{x}(\mu_0,0)[\phi] = 0.
    \end{equation}
\end{thm}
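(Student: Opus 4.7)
The plan is to parameterize the non-trivial branch $\Gamma_2$ as $(\mu(\varepsilon), x(\varepsilon))$ with $\mu(0)=\mu_0$, $x(0)=0$, and $x'(0)=x_0$, then repeatedly differentiate the identity $\mathcal{F}(\mu(\varepsilon),x(\varepsilon))=0$ with respect to $\varepsilon$ and pair the resulting expressions with the linear functional $l$ at $\varepsilon=0$. Because of hypothesis (I), $\mathcal{F}(\mu,0)\equiv 0$ implies that every purely $\mu$-partial derivative vanishes on the trivial branch; in particular $\mathcal{F}_\mu(\mu_0,0)=0$ and $\mathcal{F}_{\mu\mu}(\mu_0,0)=0$. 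Differentiating once and evaluating at $\varepsilon=0$ merely recovers $\mathcal{F}_x(\mu_0,0)[x_0]=0$, which is hypothesis (II).

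Differentiating a second time and using $\mathcal{F}_{\mu\mu}(\mu_0,0)=0$ yields
$$2\mu'(0)\,\mathcal{F}_{\mu x}(\mu_0,0)[x_0] + \mathcal{F}_{xx}(\mu_0,0)[x_0,x_0] + \mathcal{F}_x(\mu_0,0)[x''(0)] = 0.$$
Applying $l$ annihilates the last term because $\mathrm{Ker}\,l=\mathrm{Im}\,\mathcal{F}_x(\mu_0,0)$, and hypothesis (IV) guarantees $\langle l,\mathcal{F}_{\mu x}(\mu_0,0)[x_0]\rangle\neq 0$, so one can solve for $\mu'(0)$ to obtain \re{mu-d}. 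If $\mu'(0)\neq 0$ the local expansion $\mu(\varepsilon)=\mu_0+\mu'(0)\varepsilon+O(\varepsilon^2)$ crosses the line $\mu=\mu_0$ transversally, giving a transcritical bifurcation; if $\mu'(0)=0$, the second-derivative identity collapses to $\mathcal{F}_{xx}(\mu_0,0)[x_0,x_0]+\mathcal{F}_x(\mu_0,0)[x''(0)]=0$, so $x''(0)$ solves \re{eqn:phi} and may be identified with $\phi$.

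Differentiating a third time at $\varepsilon=0$, assuming $\mu'(0)=0$, and discarding every term containing $\mu'(0)$, $\mathcal{F}_\mu$, or $\mathcal{F}_{\mu\mu}$ leaves
$$\mathcal{F}_{xxx}(\mu_0,0)[x_0,x_0,x_0] + 3\mu''(0)\,\mathcal{F}_{\mu x}(\mu_0,0)[x_0] + 3\mathcal{F}_{xx}(\mu_0,0)[x_0,\phi] + \mathcal{F}_x(\mu_0,0)[x'''(0)] = 0.$$
Pairing with $l$ kills the final term and solves for $\mu''(0)$, yielding \re{eqn:mu-d2}. A pitchfork then corresponds to $\mu(\varepsilon)=\mu_0+\tfrac12\mu''(0)\varepsilon^2+O(\varepsilon^3)$, a quadratic tangency to the trivial branch with concavity fixed by the sign of $\mu''(0)$.

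The main obstacle is gauge-fixing, since $\phi$ in \re{eqn:phi} is only determined modulo $\mathrm{Ker}\,\mathcal{F}_x(\mu_0,0)=\mathrm{span}(x_0)$, and $x''(0)$ inherits an analogous ambiguity from the choice of parameterization along $\Gamma_2$. I would resolve this by splitting $X=\mathrm{span}(x_0)\oplus Z$ through a continuous projection and writing $x(\varepsilon)=\varepsilon x_0 + z(\varepsilon)$ with $z(\varepsilon)\in Z$ (this is the standard Lyapunov-Schmidt normalization used to produce $\Gamma_2$ in Theorem \ref{CRthm} in the first place); then $z(0)=z'(0)=0$ and $z''(0)$ is the unique element of $Z$ satisfying \re{eqn:phi}, which I take as the canonical $\phi$. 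One finally checks that the $\mu''(0)$ formula is invariant under the residual freedom $\phi\mapsto\phi+c x_0$: the induced change in the numerator is $3c\langle l,\mathcal{F}_{xx}(\mu_0,0)[x_0,x_0]\rangle$, which vanishes precisely because $\mu'(0)=0$ forced $\mathcal{F}_{xx}(\mu_0,0)[x_0,x_0]\in\mathrm{Im}\,\mathcal{F}_x(\mu_0,0)=\mathrm{Ker}\,l$. Once this bookkeeping is in place, everything else is a chain-rule expansion.
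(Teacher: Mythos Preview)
Your argument is correct and is precisely the standard Lyapunov--Schmidt/chain-rule derivation found in the cited references \cite{Liu2007,shi1999persistence}; the paper itself does not supply a proof of Theorem~\ref{shi} but merely quotes it, so there is nothing further to compare. Your treatment of the gauge ambiguity in $\phi$ (invariance of \eqref{eqn:mu-d2} under $\phi\mapsto\phi+cx_0$ because $\mu'(0)=0$ forces $\mathcal{F}_{xx}(\mu_0,0)[x_0,x_0]\in\mathrm{Ker}\,l$) is a nice touch that the references leave implicit.
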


\section{The existence of bifurcation branches}
In this section, we give an alternative proof to \cite{friedman2001symmetry} regarding the existence of symmetry-breaking bifurcation branches in the stationary problem for system \re{model1} -- \re{model6}. This proof employs the Crandall-Rabinowitz Theorem and the methodologies are similar to those used in \cite{friedman2008stability,angio1,wang2014bifurcation,zhao2021convergence,zhao2020symmetry}.

{\color{black}In the context of the Crandall-Rabinowitz Theorem, we define the curve $\Gamma_1$ as the branch of radially symmetric stationary solution discussed in Section 2.1. Specifically, $\Gamma_1$ is given by
$$\Gamma_1 = \{(\mu,\sigma_S,\widetilde{p}_S,p_S^*,\partial B_{R_S}):\,\mu>0\}.$$
Based on this radially symmetric solution branch,
we consider a family of domains $\Omega_\varepsilon$ with perturbed boundaries
$$\partial \Omega_\varepsilon: \, r=R_S + \widetilde{R}(\theta),$$
where $\widetilde{R}(\theta)=\varepsilon S(\theta)$ with $|\varepsilon|\ll 1$. Within the perturbed domain $\Omega_\varepsilon$, we denote $(\sigma,p)$ by the unique solution of the system
\begin{align}
    \label{ps1}-\Delta \sigma + \sigma = 0 \hspace{2em} &\text{in }\Omega_\varepsilon,\\
    \label{ps2}-\Delta p = \mu(\sigma - \widetilde{\sigma}) \hspace{2em}&\text{in }\Omega_\varepsilon,\\
    \label{ps3}\sigma = 1\hspace{2em} &\text{on } \partial \Omega_\varepsilon,\\
    \label{ps4}p = \kappa \hspace{2em} &\text{on } \partial \Omega_\varepsilon.
\end{align}
Similar to the decomposition in Section 2.1, we decompose $p$ as
\begin{equation}\label{decom}
    p=\widetilde{p} + \mu p^*,
\end{equation}
where $\widetilde{p}$ and $p^*$ satisfy the following boundary value problems:
\begin{equation}\label{eqn:pt}
    \left\{
    \begin{aligned}
    &-\Delta \widetilde{p} = 0 \hspace{2em}&\text{in }\Omega_\varepsilon,\\
    &\widetilde{p} = \kappa \hspace{2em}&\text{on }\p \Omega_\varepsilon,
\end{aligned}
    \right.
\end{equation}
and
\begin{equation}\label{eqn:ps}
    \left\{
    \begin{aligned}
    &-\Delta p^* = \sigma-\widetilde{\sigma} \hspace{2em}&\text{in }\Omega_\varepsilon,\\
    &p^* = 0 \hspace{2em}&\text{on }\p \Omega_\varepsilon,
\end{aligned}
    \right.
\end{equation}
respectively. This decomposition allows us to eliminate the dependence of the solution on the bifurcation parameter $\mu$. It is clear that the system \re{ps1} -- \re{ps4} is equivalent to solving for $(\sigma,\widetilde{p},p^*)$ from \re{ps1}, \re{ps3}, \re{eqn:pt}, and \re{eqn:ps}. For notation simplicity, we shall refer to this new system as System $(\mathcal{A})$.
}

We  define the bifurcation equation $\mathcal{F}$ as
\begin{equation}
\begin{split}
    \label{F}
    \mathcal{F}(\mu,\widetilde{R}) = \frac{\p p}{\p n}\bigg|_{\p \Omega_\varepsilon}.
\end{split}
\end{equation}
Combining with \re{decom}, we have
\begin{equation}
\begin{split}
    \label{F1}
    \mathcal{F}(\mu,\widetilde{R}) = \Big(\frac{\p \widetilde{p}}{\p n} + \mu\frac{\p p^*}{\p n}\Big)\bigg|_{\p \Omega_\varepsilon}
\end{split}
\end{equation}
By \re{model5}, $\mathcal{F}(\mu,\widetilde{R})$ represents the negative value of the normal velocity of the free boundary. In a stationary solution, the free boundary remains unchanged. Therefore, $(\sigma,\widetilde{p},p^*)$ is a stationary solution of System $(\mathcal{A})$ in the perturbed domain $\Omega_\varepsilon$ if and only if $\mathcal{F}(\mu,\widetilde{R}) = 0$.

The function $S(\theta)$ may be viewed as a function defined on the unit circle
$$\Sigma = \{x\in \mathbb{R}^2:\;|x|=1\},$$
so it is natural to impose $2\pi$-periodic boundary condition on the function $S(\theta)$. Furthermore, it can be proved that the solution to System $(\mathcal{A})$ is even in variable $\theta$ if we assume $S(\theta)=S(-\theta)$. As a result, we introduce the following Banach spaces: for any integer $l\ge0$ and $0<\alpha<1$,
\begin{gather}
    X^{l+\alpha} = \{\widetilde{R}\in C^{l+\alpha}(\Sigma): \widetilde{R} \text{ is $2\pi$-periodic in $\theta$}, \text{and } \widetilde{R} \text{ is even}\},\label{bsp1}\\
    X^{l+\alpha}_2 = \text{closure of the linear subspace spanned by } \{\cos(j\theta),\;j=0,2,4,\cdots\} \text{ in } X^{l+\alpha}.\label{bsp2}
\end{gather}

For System $(\mathcal{A})$, one can apply Schauder theory to establish the following lemma:
\begin{lem}\label{lemnew3.1}
    If $S\in C^{3+\alpha}(\Sigma)$ and $(\sigma,\widetilde{p},p^*)$ satisfies System $(\mathcal{A})$, then $\sigma\in C^{3+\alpha}(\overline{\Omega}_\varepsilon)$, $\widetilde{p}\in C^{1+\alpha}(\overline{\Omega}_\varepsilon)$, and $p^*\in C^{3+\alpha}(\overline{\Omega}_\varepsilon)$.
\end{lem}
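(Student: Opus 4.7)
The plan is a sequential application of standard Schauder elliptic regularity to the three decoupled linear Dirichlet problems comprising System $(\mathcal{A})$. The first observation is that since $S \in C^{3+\alpha}(\Sigma)$, the perturbed boundary $\partial\Omega_\varepsilon:\,r=R_S+\varepsilon S(\theta)$ is a simple closed curve of class $C^{3+\alpha}$, so that in each equation below one may locally straighten $\partial\Omega_\varepsilon$ by a $C^{3+\alpha}$ diffeomorphism and reduce to a half-disc setting.

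I would first address $\sigma$, which solves $-\Delta\sigma+\sigma=0$ in $\Omega_\varepsilon$ with constant Dirichlet datum $\sigma=1$ on $\partial\Omega_\varepsilon$. Since the boundary is $C^{3+\alpha}$ and the datum is smooth, classical Schauder boundary regularity at the $C^{3+\alpha}$ level yields $\sigma\in C^{3+\alpha}(\overline{\Omega}_\varepsilon)$. I would then bootstrap this into the equation for $p^*$, namely $-\Delta p^*=\sigma-\widetilde{\sigma}$ in $\Omega_\varepsilon$ with $p^*=0$ on $\partial\Omega_\varepsilon$: the source term lies in $C^{3+\alpha}(\overline{\Omega}_\varepsilon)$ (in particular in $C^{1+\alpha}$, which is what Schauder requires to return a $C^{3+\alpha}$ solution), the boundary datum is zero, and the boundary is $C^{3+\alpha}$, so Schauder once more gives $p^*\in C^{3+\alpha}(\overline{\Omega}_\varepsilon)$.

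The remaining equation $-\Delta\widetilde{p}=0$ with $\widetilde{p}=\kappa$ on $\partial\Omega_\varepsilon$ is the most delicate, and I regard it as the main obstacle, because the boundary datum $\kappa$ inherits only the regularity of the second derivatives of $S$. Explicitly, $\kappa$ depends on the first and second derivatives of the parameterization $R_S+\varepsilon S(\theta)$, so $S\in C^{3+\alpha}$ forces $\kappa\in C^{1+\alpha}(\partial\Omega_\varepsilon)$ and in general no better. I would then invoke the classical Schauder/Kellogg theorem for the Dirichlet problem with H\"older boundary data on a smooth planar domain, or equivalently flatten the boundary by a $C^{3+\alpha}$ diffeomorphism and apply the half-space Schauder estimate in the $C^{1+\alpha}$ scale, to conclude $\widetilde{p}\in C^{1+\alpha}(\overline{\Omega}_\varepsilon)$, which completes the proof.
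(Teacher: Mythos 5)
Your proof is correct and follows exactly the route the paper intends: the paper simply says that ``one can apply Schauder theory'' and points to references, and your argument spells this out in the natural order (regularity of $\sigma$, then $p^*$ via the $C^{1+\alpha}$ source, then $\widetilde{p}$ with the $C^{1+\alpha}$ boundary datum $\kappa$ giving only $C^{1+\alpha}$ regularity). The bookkeeping of H\"older exponents, including why $\kappa$ lands in $C^{1+\alpha}$ and why that caps $\widetilde{p}$ at $C^{1+\alpha}$, is precisely what accounts for the asymmetric conclusion of the lemma.
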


Lemma \ref{lemnew3.1} shows that the mapping $(\mu,\widetilde{R})\mapsto \mathcal{F}(\mu,\widetilde{R})$ is from $\mathbb{R}^+ \times C^{3+\alpha}(\Sigma)$ to $C^{\alpha}(\Sigma)$. Recalling the definitions of Banach spaces $X^{l+\alpha}$ and $X_2^{l+\alpha}$ in \re{bsp1} and \re{bsp2}, we can use similar arguments to prove $\mathcal{F}(\mu,\widetilde{R})$ maps from $X^{l+3+\alpha}$ to $X^{l+\alpha}$ (or from $X^{l+3+\alpha}_2$ to $X^{l+\alpha}_2$) for any integer $l\ge 0$.

{\color{black}In order to apply the Crandall-Rabinowitz Theorem, we need to compute the Fr\'echet derivatives of $\mathcal{F}$. To do that, we shall analyze expansions of $(\sigma,\widetilde{p},p^*)$ in $\varepsilon$. For any $\mu>0$, we formally write
\begin{gather}
    \label{expands}\sigma = \sigma_S + \varepsilon\sigma_1 +  O(\varepsilon^2),\\
    \label{expandpt}\widetilde{p} = \widetilde{p}_S + \varepsilon \widetilde{p}_1 +  O(\varepsilon^2),\\
    \label{expandps}p^* = p^*_S + \varepsilon p_1^* +  O(\varepsilon^2).
\end{gather}
Substituting equations \re{expands} -- \re{expandps} into  System $(\mathcal{A})$, neglecting terms of order $O(\varepsilon^2)$, and also recalling that
\begin{equation}\label{kappa}
    \kappa = \frac{(R_S+\varepsilon S)^2 + 2S_\theta^2 - (R_S+\varepsilon S) \cdot S_{\theta\theta}}{\big((R_S+\varepsilon S)^2 + S_\theta^2\big)^{3/2}}  =\frac{1}{R_S}-\varepsilon\frac{1}{R_S^2}\Big(S+S_{\theta\theta}\Big) + O(\varepsilon^2),
\end{equation}
we obtain the linearized systems for $\sigma_1$, $\widetilde{p}_1$, and $p^*_1$:
\begin{equation}\label{eqn:sigma1}
    \left\{
    \begin{aligned}
        &-\Delta \sigma_1 + \sigma_1 = 0 \hspace{2em} &\text{in } B_{R_S},\\
        &\sigma_1(R_S,\theta) = -\frac{\p \sigma_S(R_S)}{\p r} S(\theta) \hspace{2em}&\text{on } \p B_{R_S},
    \end{aligned}
    \right.
\end{equation}
\begin{equation}
    \label{eqn:p1t}
    \left\{
    \begin{aligned}
     &-\Delta \widetilde{p}_1 = 0 \hspace{2em}&\text{in }B_{R_S},\\
    & \widetilde{p}_1(R_S,\theta) = -\frac{1}{R_S^2}(S(\theta)+S_{\theta\theta}(\theta)) -\frac{\p \widetilde{p}_S(R_S)}{\p r}S(\theta) \hspace{2em}&\text{on }\p B_{R_S},
    \end{aligned}
    \right.
\end{equation}
\begin{equation}
    \label{eqn:p1s}
    \left\{
    \begin{aligned}
     &-\Delta p^*_1 = \sigma_1 \hspace{2em}&\text{in }B_{R_S},\\
    & p^*_1(R_S,\theta) = -\frac{\p p_S^*(R_S)}{\p r}S(\theta) \hspace{2em}&\text{on }\p B_{R_S}.
    \end{aligned}
    \right.
\end{equation}
In the subsequent Section 4, we will consider the next order of approximation and prove a more refined formula for $\kappa$.

Following \cite{friedman2008stability,angio1,wang2014bifurcation,zhao2020symmetry}, the following lemma can be easily proved.

\begin{lem}\label{lemnew3.2}
If $S\in C^{3+\alpha}(\Sigma)$ and $(\sigma,\widetilde{p},p^*)$ satisfies System $(\mathcal{A})$, then
\begin{eqnarray}
    &\|\sigma-\sigma_S\|_{C^{3+\alpha}(\overline{\Omega}_\varepsilon)} \le C|\varepsilon| \|S\|_{C^{3+\alpha}(\Sigma)},\\
    &\|(\widetilde{p}+\mu p^*)-(\widetilde{p}_S+\mu p^*_S)\|_{C^{1+\alpha}(\overline{\Omega}_\varepsilon)} \le C|\varepsilon| \|S\|_{C^{3+\alpha}(\Sigma)},
\end{eqnarray}
where the constant $C$ is independent of $\varepsilon$.
\end{lem}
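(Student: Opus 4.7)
The plan is to reduce the estimate on the perturbed domain $\Omega_\varepsilon$ to standard Schauder bounds on the fixed reference disk $B_{R_S}$ via a Hanzawa-type diffeomorphism, and then to read off each estimate from linear elliptic regularity, since the transformed boundary data (and, for the pressure, the curvature term) differ from their radial counterparts only by $O(\varepsilon)$ in the relevant norms.

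First I would introduce the diffeomorphism $\Psi_\varepsilon : \overline{B}_{R_S} \to \overline{\Omega}_\varepsilon$, defined in polar coordinates by $\Psi_\varepsilon(r,\theta) = (r + \chi(r)\varepsilon S(\theta),\theta)$, where $\chi\in C^\infty([0,R_S])$ is a cutoff equal to $1$ at $r=R_S$ and vanishing near $r=0$. Standard computations give $\|\Psi_\varepsilon - \mathrm{id}\|_{C^{k+\alpha}}\le C|\varepsilon|\|S\|_{C^{k+\alpha}(\Sigma)}$ for $k=0,1,2,3$ and uniform invertibility of the Jacobian for $|\varepsilon|\ll 1$. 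Define the pullbacks $\widehat{\sigma} = \sigma\circ\Psi_\varepsilon$ and $\widehat{p} = (\widetilde{p}+\mu p^*)\circ\Psi_\varepsilon$ on $\overline{B}_{R_S}$. The pulled-back Laplacian takes the form $\Delta + \mathcal{E}_\varepsilon$, where $\mathcal{E}_\varepsilon$ is a second-order operator whose coefficients are bounded by $C|\varepsilon|\|S\|_{C^{3+\alpha}}$ in the relevant H\"older norms.

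For the concentration, set $u = \widehat\sigma - \sigma_S$. Then $u$ satisfies $-\Delta u + u = -\mathcal{E}_\varepsilon \widehat{\sigma}$ in $B_{R_S}$ with boundary data $u|_{\partial B_{R_S}} = 1 - \sigma_S(R_S+\varepsilon S) = -\varepsilon\frac{\partial \sigma_S(R_S)}{\partial r}S + O(\varepsilon^2)$, whose $C^{3+\alpha}(\Sigma)$ norm is bounded by $C|\varepsilon|\|S\|_{C^{3+\alpha}}$. Treating the remaining $\mathcal{E}_\varepsilon u$ term as a small perturbation of $-\Delta + 1$ (invertible for small $\varepsilon$), Schauder theory gives $\|u\|_{C^{3+\alpha}(\overline{B}_{R_S})}\le C|\varepsilon|\|S\|_{C^{3+\alpha}}$, and pushing forward by $\Psi_\varepsilon$ yields the first claimed bound on $\overline{\Omega}_\varepsilon$. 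For the combined pressure, the same procedure applied to $v = \widehat{p}-(\widetilde{p}_S+\mu p^*_S)$ produces a Poisson-type problem on $B_{R_S}$ whose source is controlled by the first estimate together with an $O(\varepsilon)$ perturbation, and whose Dirichlet data is $\kappa - 1/R_S = -\varepsilon R_S^{-2}(S+S_{\theta\theta})+O(\varepsilon^2)$ by the expansion \re{kappa}. Because this datum involves $S_{\theta\theta}$, it is only bounded in $C^{1+\alpha}(\Sigma)$ by $C|\varepsilon|\|S\|_{C^{3+\alpha}}$, so Schauder yields $\|v\|_{C^{1+\alpha}(\overline{B}_{R_S})}\le C|\varepsilon|\|S\|_{C^{3+\alpha}}$, which is consistent with the regularity from Lemma \ref{lemnew3.1} and matches the statement of the lemma after pulling back.

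The main obstacle is the bookkeeping that guarantees all constants are uniform in $\varepsilon$: one must track that the H\"older norms of the coefficients of $\mathcal{E}_\varepsilon$, of the Jacobian of $\Psi_\varepsilon$ and its inverse, and of the various trace operators remain uniformly bounded, and that the perturbed elliptic operators stay uniformly elliptic with Schauder constants independent of $\varepsilon$ for $|\varepsilon|$ small. These verifications are standard but tedious, and the argument proceeds exactly as in \cite{friedman2008stability,angio1,wang2014bifurcation,zhao2020symmetry}, which the paper itself invokes.
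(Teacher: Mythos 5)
Your approach---pull back via a Hanzawa diffeomorphism, compare with the radial solution, and close with Schauder estimates for the perturbed elliptic problem---is precisely the strategy the paper invokes without proof by citing \cite{friedman2008stability,angio1,wang2014bifurcation,zhao2020symmetry}, and it is substantially sound. There is, however, a small internal inconsistency worth correcting. With $u = \widehat\sigma - \sigma_S$ on $B_{R_S}$, where $\widehat\sigma = \sigma\circ\Psi_\varepsilon$, the boundary trace is
$$u(R_S,\theta) = \widehat\sigma(R_S,\theta) - \sigma_S(R_S) = 1 - 1 = 0,$$
not $1 - \sigma_S(R_S+\varepsilon S)$. In the pulled-back formulation, the $O(\varepsilon)$ smallness must therefore come entirely from the operator perturbation $\mathcal{E}_\varepsilon$, whose coefficients carry the factor $|\varepsilon|\|S\|_{C^{3+\alpha}}$, combined with a uniform-in-$\varepsilon$ a priori bound on $\|\widehat\sigma\|_{C^{3+\alpha}(\overline{B}_{R_S})}$ (obtained by absorbing the small perturbation into $-\Delta+1$). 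The formula $u|_{\partial B_{R_S}} = 1 - \sigma_S(R_S+\varepsilon S) = -\varepsilon\,\frac{\p\sigma_S(R_S)}{\p r}\,S + O(\varepsilon^2)$ belongs instead to an equivalent but distinct route, and it is the one the paper itself actually uses later, in the proof of Lemma~\ref{est2}: since $\sigma_S$ in \re{sigmaS} is given by an explicit formula that extends to all of $\mathbb{R}^2$ and solves $-\Delta\sigma_S+\sigma_S=0$ there, one may compare $\sigma-\sigma_S$ directly on $\Omega_\varepsilon$ with no pullback; the difference then solves the \emph{homogeneous} equation on $\Omega_\varepsilon$ with that $O(\varepsilon)$ boundary datum, and Schauder on $\Omega_\varepsilon$ finishes at once. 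That variant sidesteps the coefficient-perturbation bookkeeping you flag as tedious, and for this reason is arguably cleaner; your Hanzawa version also works once the boundary condition is corrected and the uniform a priori bound on $\widehat\sigma$ is noted. The treatment of $\widetilde p + \mu p^*$ mirrors this, and your observation that the curvature datum $\kappa - 1/R_S$ involves $S_{\theta\theta}$ and hence costs two derivatives---so that the pressure estimate lands only in $C^{1+\alpha}$ while still requiring $S \in C^{3+\alpha}$---is exactly right and consistent with Lemma~\ref{lemnew3.1}.
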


To further justify equations \re{expands} -- \re{expandps}, we need to estimate the $O(\varepsilon^2)$ terms using the appropriate norms. It is important to note that $(\sigma,\widetilde{p},p^*)$ is defined only on $\Omega_\varepsilon$, $(\sigma_S,\widetilde{p}_S,p^*_S)$ has explicit expressions and is defined everywhere on $\mathbb{R}^2$, and $(\sigma_1,\widetilde{p}_1,p^*_1)$ is defined only on $B_{R_S}$. Therefore, we must transform these functions into a common domain before proceeding with our analysis. To this end, we introduce the Hanzawa transformation $H_\varepsilon$, which is defined by
\begin{equation}
    \label{Hanzawa}
    (r,\theta) = H_\varepsilon(r',\theta')\equiv (r'+\chi(R_S -r')\varepsilon S(\theta'),\theta'),
\end{equation}
where $\chi(z):{\mathbb R}\to {\mathbb R}$ is a bounded function satisfying
\begin{equation*}
    \chi \in C^\infty,\hspace{2em} \chi(z) = \left\{
    \begin{aligned}
        &0, \hspace{1em} \text{if } |z|\ge 3\delta_0/4\\
        &1, \hspace{1em} \text{if } |z|< \delta_0/4
    \end{aligned}
    \right. , \hspace{2em} \bigg|\frac{\p^k \chi}{\p z^k}\bigg| \le \frac{C}{\delta_0^k}, \;\; k=1,2,\cdots,
\end{equation*}
and $\delta_0$ is a small positive constant. The Hanzawa transformation $H_\varepsilon$ maps  $B_{R_S}$ into $\Omega_\varepsilon$ while keeping the ball $\{r < R_S - \frac34 \delta_0\}$ fixed to avoid the singularity of the Laplace operator at $0$. The inverse transformation $H^{-1}_\varepsilon$ maps $\Omega_\varepsilon$ onto $B_{R_S}$.

Using the Hanzawa transformation, we let
\begin{equation}
    \label{Htrans}
    \widehat{\sigma}_1(r,\theta) = \sigma_1(H_\varepsilon^{-1}(r,\theta)),\hspace{2em} \widehat{\widetilde{p}}_1(r,\theta) = \widetilde{p}_1(H_\varepsilon^{-1}(r,\theta)), \hspace{2em} \widehat{p}^*_1(r,\theta)= p^*_1(H_\varepsilon^{-1}(r,\theta)).
\end{equation}
Then, $(\sigma,\widetilde{p},p^*)$, $(\sigma_S,\widetilde{p}_S,p^*_S)$, and $(\widehat{\sigma}_1,\widehat{\widetilde{p}}_1,\widehat{p}^*_1)$ are all defined on the same domain $\Omega_\varepsilon$. This allows us to establish another lemma. The proof of the lemma follows methods similar to those found in \cite{friedman2008stability, angio1, wang2014bifurcation, zhao2020symmetry}. We introduce additional decompositions for
$p$, $p_S$, and $p_1$ here; however, these do not impact the validity of the proof.

\begin{lem}
    If $S\in C^{3+\alpha}(\Sigma)$,  $(\sigma,\widetilde{p},p^*)$ satisfies System $(\mathcal{A})$ and $(\widehat{\sigma}_1,\widehat{\widetilde{p}}_1,\widehat{p}^*_1)$ is defined in \re{Htrans} where $(\sigma_1,\widetilde{p}_1,p^*_1)$ is the solution to the system \re{eqn:sigma1} -- \re{eqn:p1s}, then
    \begin{eqnarray}
    &\|\sigma-\sigma_S-\varepsilon \widehat{\sigma}_1 \|_{C^{3+\alpha}(\overline{\Omega}_\varepsilon)} \le C|\varepsilon|^2 \|S\|_{C^{3+\alpha}(\Sigma)},\label{ests}\\
    &\|(\widetilde{p}+\mu p^*)-(\widetilde{p}_S+\mu p^*_S) - \varepsilon(\widehat{\widetilde{p}}_1 + \mu \widehat{p}^*_1)\|_{C^{1+\alpha}(\overline{\Omega}_\varepsilon)} \le C|\varepsilon| \|S\|_{C^{3+\alpha}(\Sigma)},\label{estp}
\end{eqnarray}
where the constant $C$ is independent of $\varepsilon$.
\end{lem}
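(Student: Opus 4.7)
The plan is to pull back System $(\mathcal{A})$ from the moving domain $\Omega_\varepsilon$ to the fixed ball $B_{R_S}$ via the Hanzawa transformation $H_\varepsilon$, so that the perturbed solution, the radial reference $(\sigma_S,\widetilde{p}_S,p_S^*)$, and the linearization $(\sigma_1,\widetilde{p}_1,p_1^*)$ all live on a common domain. Writing $\widehat{\sigma}:=\sigma\circ H_\varepsilon$ and analogously for $\widetilde{p}$, $p^*$, the pulled-back Laplacian admits an expansion
\begin{equation*}
\Delta \;=\; \Delta_0 \,+\, \varepsilon\, L_1 \,+\, \varepsilon^2\, \mathcal{R}[\varepsilon S],
\end{equation*}
where $\Delta_0$ is the standard Laplacian on $B_{R_S}$, $L_1$ is a linear second-order differential operator whose coefficients depend on $\chi,\chi',\chi''$ and on $S,S_\theta,S_{\theta\theta}$, and $\mathcal{R}[\varepsilon S]$ is a second-order operator whose coefficients are uniformly bounded in $C^{1+\alpha}$ by $C\|S\|_{C^{3+\alpha}(\Sigma)}^2$. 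The boundary conditions transfer to $r=R_S$, and the mean curvature is expanded via \re{kappa} with a Taylor remainder of order $\varepsilon^2\|S\|_{C^{3+\alpha}(\Sigma)}^2$ in $C^{1+\alpha}$.

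For the first estimate, define $\Psi_\varepsilon(r,\theta):=\widehat{\sigma}(r,\theta)-\sigma_S(r)-\varepsilon\,\sigma_1(r,\theta)$. Subtracting \re{eqn:sigma1} from the pulled-back version of \re{ps1},\re{ps3} for $\widehat{\sigma}$ and using Lemma \ref{lemnew3.2} to control $\widehat{\sigma}-\sigma_S$ in terms of $\varepsilon\|S\|_{C^{3+\alpha}(\Sigma)}$, one finds that $\Psi_\varepsilon$ solves a linear elliptic problem
\begin{equation*}
-\Delta_0 \Psi_\varepsilon + \Psi_\varepsilon \,=\, \varepsilon^2 F_1 \;\;\text{in } B_{R_S}, \qquad \Psi_\varepsilon\big|_{r=R_S} \,=\, \varepsilon^2 G_1,
\end{equation*}
with $\|F_1\|_{C^{1+\alpha}(\overline{B}_{R_S})}+\|G_1\|_{C^{3+\alpha}(\Sigma)}\le C\|S\|_{C^{3+\alpha}(\Sigma)}^2$. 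Standard Schauder estimates then yield $\|\Psi_\varepsilon\|_{C^{3+\alpha}(\overline{B}_{R_S})}\le C\varepsilon^2\|S\|_{C^{3+\alpha}(\Sigma)}$; pulling back through $H_\varepsilon^{-1}$, which is a $C^{3+\alpha}$-diffeomorphism with norms uniformly bounded in $\varepsilon$, establishes \re{ests}.

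The second estimate is handled in exactly the same fashion for $\widehat{\widetilde{p}}+\mu\widehat{p}^*-\widetilde{p}_S-\mu p_S^* -\varepsilon(\widetilde{p}_1+\mu p_1^*)$. The resulting linear elliptic problem has source term controlled by $\Psi_\varepsilon$ (so of order $\varepsilon^2$) together with commutator terms $(L_1+\varepsilon\mathcal{R})$ acting on the already-known lower-order pieces, plus a Dirichlet datum of order $\varepsilon^2$ coming from the quadratic remainder in the curvature expansion \re{kappa}. Schauder theory for the Dirichlet problem on $B_{R_S}$ then gives \re{estp}; the $C^{1+\alpha}$ ceiling (rather than $C^{3+\alpha}$) is dictated by the limited smoothness of the curvature boundary datum, exactly as in Lemma \ref{lemnew3.1}. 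The main obstacle is the careful bookkeeping of the pulled-back Laplacian and curvature expansions: identifying which coefficients reproduce precisely the linearized systems \re{eqn:sigma1}--\re{eqn:p1s}, and verifying that the residual, once the zeroth- and first-order terms in $\varepsilon$ are subtracted off, sits in the correct H\"older space with the asserted $\varepsilon^2\|S\|_{C^{3+\alpha}(\Sigma)}^2$ bound.
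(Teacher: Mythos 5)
Your argument has a genuine gap. The quantity $\Psi_\varepsilon = \widehat{\sigma} - \sigma_S - \varepsilon\sigma_1$ on $B_{R_S}$ is \emph{not} $O(\varepsilon^2)$, and it is also \emph{not} the Hanzawa pullback of the function whose norm appears in \re{ests}. For the first point, compute the boundary trace directly: using $\widehat{\sigma}(R_S,\theta)=\sigma|_{\p\Omega_\varepsilon}=1$, $\sigma_S(R_S)=1$, and $\sigma_1(R_S,\theta)=-\frac{\p\sigma_S(R_S)}{\p r}S(\theta)$ from \re{eqn:sigma1}, one gets
\begin{equation*}
\Psi_\varepsilon(R_S,\theta) \;=\; 1-1-\varepsilon\Big(-\frac{\p\sigma_S(R_S)}{\p r}S(\theta)\Big)\;=\;\varepsilon\,\frac{I_1(R_S)}{I_0(R_S)}\,S(\theta),
\end{equation*}
a nonzero $O(\varepsilon)$ datum, not $\varepsilon^2 G_1$. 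Likewise the interior residual is $-\Delta_0\Psi_\varepsilon+\Psi_\varepsilon=(\text{pullback Laplacian}-\Delta_0)\widehat\sigma=\varepsilon L_1\sigma_S+O(\varepsilon^2)$, also $O(\varepsilon)$. For the second point, the true pullback of $W:=\sigma-\sigma_S-\varepsilon\widehat{\sigma}_1$ (defined on $\Omega_\varepsilon$) is $W\circ H_\varepsilon=\widehat{\sigma}-\sigma_S\circ H_\varepsilon-\varepsilon\sigma_1$, in which the middle term $\sigma_S\bigl(r'+\chi(R_S-r')\varepsilon S(\theta')\bigr)$ is also transformed; the discrepancy $\sigma_S\circ H_\varepsilon-\sigma_S=\varepsilon\chi S\frac{\p\sigma_S}{\p r}+O(\varepsilon^2)$ is precisely the $O(\varepsilon)$ piece your $\Psi_\varepsilon$ spuriously retains. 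The cancellation between these two $O(\varepsilon)$ errors is what makes the lemma true, and your argument does not see it: even if $\Psi_\varepsilon$ were $O(\varepsilon^2)$, "pulling back through $H_\varepsilon^{-1}$" would bound $\sigma-\sigma_S\circ H_\varepsilon^{-1}-\varepsilon\widehat{\sigma}_1$, not \re{ests}.

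The repair is either to replace $\sigma_S$ by $\sigma_S\circ H_\varepsilon$ in your $\Psi_\varepsilon$ and Taylor-expand in the radial slot, or, more cleanly and in the spirit of the paper's Lemma \ref{est2}, to work directly on $\Omega_\varepsilon$ with $W=\sigma-\sigma_S-\varepsilon\widehat{\sigma}_1$. Since $-\Delta\sigma+\sigma=0=-\Delta\sigma_S+\sigma_S$ throughout $\Omega_\varepsilon$, one has $-\Delta W+W=-\varepsilon\bigl(-\Delta\widehat{\sigma}_1+\widehat{\sigma}_1\bigr)$; as $\widehat{\sigma}_1=\sigma_1\circ H_\varepsilon^{-1}$ and $-\Delta_0\sigma_1+\sigma_1=0$ on $B_{R_S}$, the commutator $-\Delta\widehat{\sigma}_1+\widehat{\sigma}_1$ is $O\bigl(\varepsilon\|S\|_{C^{3+\alpha}(\Sigma)}\bigr)$ in $C^{1+\alpha}$, hence the source for $W$ is $O(\varepsilon^2)$. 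On $\p\Omega_\varepsilon$, using $\sigma=1$, $\sigma_S(R_S+\varepsilon S)=1+\varepsilon S\frac{\p\sigma_S(R_S)}{\p r}+O(\varepsilon^2)$, and $\widehat{\sigma}_1|_{\p\Omega_\varepsilon}=\sigma_1(R_S,\theta)=-\frac{\p\sigma_S(R_S)}{\p r}S$ (because $H_\varepsilon$ carries $r'=R_S$ onto $\p\Omega_\varepsilon$), the boundary datum is also $O(\varepsilon^2)$, and uniform Schauder estimates yield \re{ests}. The pressure estimate \re{estp} has the same structure, with the extra $O(\varepsilon^2)$ Dirichlet term coming from the quadratic remainder in the curvature expansion.
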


Since
$$\mathcal{F}(\mu,0) = \Big(\frac{\p \widetilde{p}_S}{\p r} + \mu\frac{\p p^*_S}{\p r}\Big)\bigg|_{r=R_S} = 0+ \mu\bigg(\frac{\widetilde{\sigma}}2 R_S - \frac{I_1(R_S)}{I_0(R_S)}\bigg)=0,$$
where this last equality is justified by \re{RS}, we proceed to substitute equations \re{expandpt}, \re{expandps}, and \re{estp} into \re{F1} to derive the following:
\begin{equation*}
    \begin{split}
        \mathcal{F}(\mu,\widetilde{R}) &= \Big(\frac{\p \widetilde{p}}{\p n}+ \mu\frac{\p p^*}{\p n}\Big)\bigg|_{r=R_S+\varepsilon S}  = \Big(\nabla \widetilde{p} \cdot \vec{n}+ \mu\nabla p^* \cdot \vec{n}\Big)\bigg|_{r=R_S+\varepsilon S} \\
        &= \Big( \frac{\p \widetilde{p}}{\p r} \vec{e}_r +  \frac{1}{r}\frac{\p \widetilde{p}}{\p \theta} \vec{e}_\theta\Big)\cdot \frac{1}{\sqrt{1+\big(\frac{r_\theta}{r}\big)^2}}\Big(\vec{e}_r - \frac{r_\theta}{r}\vec{e}_\theta\Big)\bigg|_{r=R_S+\varepsilon S}\\
        &\hspace{2em}+ \mu\Big( \frac{\p p^*}{\p r} \vec{e}_r +  \frac{1}{r}\frac{\p p^*}{\p \theta} \vec{e}_\theta\Big)\cdot \frac{1}{\sqrt{1+\big(\frac{r_\theta}{r}\big)^2}}\Big(\vec{e}_r - \frac{r_\theta}{r}\vec{e}_\theta\Big)\bigg|_{r=R_S+\varepsilon S}\\
        &= \frac{\p \widetilde{p}}{\p r}\bigg|_{r=R_S+\varepsilon S} + \mu \frac{\p p^*}{\p r}\bigg|_{r=R_S+\varepsilon S} + O(|\varepsilon|^2\|S\|_{C^{3+\alpha}(\Sigma)}) \\
        &= \Big(\frac{\p \widetilde{p}_S(R_S)}{\p r} + \mu\frac{\p p^*_S(R_S)}{\p r}\Big) + \Big(\frac{\p^2 \widetilde{p}_S(R_S)}{\p r^2}\varepsilon S + \mu \frac{\p^2 p^*_S(R_S)}{\p r^2}\varepsilon S + \frac{\p \widetilde{p}_1(R_S,\theta)}{\p r}\varepsilon \\
        &\hspace{2em} \mu \frac{\p p^*_1(R_S,\theta)}{\p r}\varepsilon \Big) + O(|\varepsilon|^2\|S\|_{C^{3+\alpha}(\Sigma)}).
    \end{split}
\end{equation*}
Thus,
$$\mathcal{F}(\mu,\widetilde{R}) = \mathcal{F}(\mu,0) +\varepsilon\Big(\frac{\p^2 \widetilde{p}_S(R_S)}{\p r^2}S + \mu \frac{\p^2 p^*_S(R_S)}{\p r^2}S  + \frac{\p \widetilde{p}_1(R_S,\theta)}{\p r} + \mu\frac{\p p^*_1(R_S,\theta)}{\p r}\Big) + O(|\varepsilon|^2\|S\|_{C^{3+\alpha}(\Sigma)})$$
which formally gives the Fr\'echet derivative of $\mathcal{F}$ in $\widetilde{R}$ at $(\mu,0)$
\begin{equation}
    \label{FreD}
    \mathcal{F}_{\widetilde{R}}(\mu,0) [S]= \frac{\p^2 \widetilde{p}_S(R_S)}{\p r^2}S + \mu \frac{\p^2 p^*_S(R_S)}{\p r^2}S  + \frac{\p \widetilde{p}_1(R_S,\theta)}{\p r} + \mu\frac{\p p^*_1(R_S,\theta)}{\p r}.
\end{equation}
In what follows, we shall use \re{FreD} to establish the existence of bifurcation branches by verifying the regularity and the four assumptions in Theorem \ref{CRthm}. The results are stated in the following Theorem:
\begin{thm}\label{main1}
   For any $\widetilde{\sigma}>0$, there exists a unique $R_S>0$ such that System $(\mathcal{A})$ has a family of radially symmetric stationary solutions in a form of
   \begin{equation*}\label{Gamma1}
      \Gamma_1=\{(\mu, \sigma_S(\cdot),\widetilde{p}_{S}(\cdot),p^*_S(\cdot),\partial B_{R_S}): \mu>0\}.
   \end{equation*}
   Then, for every even integer $n\ge2$, the point $(\mu_n,0)$ is a bifurcation point of System $(\mathcal{A})$, where
   \begin{equation}
       \label{thm:mun}
       \mu_n = \frac{n(n^2-1)}{R_S^3 M_n}
   \end{equation}
   and $M_n$ is defined in \re{Mn}.
   Furthermore,  in a neighborhood of $(\mu_n,0)$, the set of solutions to System $(\mathcal{A})$ consists of exactly $\Gamma_1$ and
   \begin{equation*}\label{Gamma2}
\Gamma_{2,n}=\{(\mu_n(\varepsilon),\sigma_n(\varepsilon,\cdot),\widetilde{p}_n(\varepsilon,\cdot),p^*_n(\varepsilon,\cdot),\partial \Omega_{\varepsilon,n}): |\varepsilon|\ll 1\},
   \end{equation*}
such that $\sigma_n(\varepsilon,\cdot)=\sigma_S(\cdot)+\varepsilon\sigma_{1}(\cdot)+O(\varepsilon^2)$, $\widetilde{p}_n(\varepsilon,\cdot)=\widetilde{p}_S(\cdot)+\varepsilon \widetilde{p}_{1}(\cdot)+O(\varepsilon^2)$, $p^*_n(\varepsilon,\cdot)=p^*_S(\cdot)+\varepsilon p^*_{1}(\cdot)+O(\varepsilon^2)$, where $\sigma_{1}$, $\widetilde{p}_{1}$, and $p^*_{1}$ satisfy \eqref{eqn:sigma1} -- \re{eqn:p1s} with $S(\theta)=\cos(n \theta)$, respectively, and
the corresponding free boundary $\partial\Omega_{\varepsilon,n}$ is of the form $r=R_S+\varepsilon \cos(n\theta) + O(\varepsilon^2)$.
\end{thm}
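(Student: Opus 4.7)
The plan is to verify the four hypotheses of the Crandall-Rabinowitz Theorem (Theorem~\ref{CRthm}) for the map $\mathcal{F}: \mathbb{R}^+ \times X_2^{3+\alpha} \to X_2^{\alpha}$, using the Fréchet derivative formula \re{FreD} already derived in the excerpt. Existence and uniqueness of $R_S$ for each $\widetilde{\sigma}\in(0,1)$ follow from \re{RS} together with the monotonicity of $\xi \mapsto I_1(\xi)/(\xi I_0(\xi))$, which is a consequence of the Bessel identities in Subsection 2.2. Hypothesis (I) has already been verified in the excerpt.

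For hypotheses (II)--(IV), I would explicitly compute $\mathcal{F}_{\widetilde{R}}(\mu,0)[\cos(n\theta)]$ by solving the linearized systems \re{eqn:sigma1}--\re{eqn:p1s} with $S(\theta)=\cos(n\theta)$. Separation of variables and the modified Bessel equation force $\sigma_1 = c\,I_n(r)\cos(n\theta)$, and the boundary data of \re{eqn:sigma1} combined with \re{ds1} determines $c$. Since $S+S_{\theta\theta} = (1-n^2)\cos(n\theta)$ and $\partial_r \widetilde{p}_S(R_S)=0$ by \re{dpt}, the harmonic equation \re{eqn:p1t} yields $\widetilde{p}_1(r,\theta) = \frac{n^2-1}{R_S^{n+2}}\,r^n\cos(n\theta)$, so $\partial_r\widetilde{p}_1(R_S,\theta) = \frac{n(n^2-1)}{R_S^3}\cos(n\theta)$. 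For $p_1^*$, I would combine a particular solution proportional to $I_n(r)\cos(n\theta)$ with a homogeneous solution $r^n\cos(n\theta)$, use the boundary condition from \re{eqn:p1s} together with \re{dps1}, and invoke identity \re{bessel4} to arrive at $\partial_r p^*_1(R_S,\theta) = \frac{I_1(R_S)\,I_{n+1}(R_S)}{I_0(R_S)\,I_n(R_S)}\cos(n\theta)$.

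Substituting these into \re{FreD} together with \re{dpt}, \re{dps2}, and \re{RS} (which replaces $2I_1(R_S)/(R_S I_0(R_S))$ by $\widetilde{\sigma}$), the linearization diagonalizes in the Fourier basis and acts via
\begin{equation*}
\cos(n\theta) \;\longmapsto\; \Big[\frac{n(n^2-1)}{R_S^3} \;-\; \mu\, M_n\Big]\cos(n\theta),
\end{equation*}
where $M_n = 1-\widetilde{\sigma}-\frac{I_1(R_S)\,I_{n+1}(R_S)}{I_0(R_S)\,I_n(R_S)}$ is the quantity defined in \re{Mn}. Setting the bracket to zero yields exactly $\mu_n = n(n^2-1)/(R_S^3 M_n)$. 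Hypothesis (II) then reduces to checking that $M_n>0$ (so that $\mu_n>0$) and that for every even $k\ne n$, $k\ge 0$, the coefficient $k(k^2-1)/R_S^3-\mu_n M_k$ is nonzero. Hypothesis (III) is immediate from the diagonal structure: the image is the codimension-one closed subspace spanned by $\{\cos(k\theta): k\text{ even}, k\ne n\}$ in $X_2^{\alpha}$. For hypothesis (IV), one computes $\mathcal{F}_{\mu\widetilde{R}}(\mu_n,0)[\cos(n\theta)] = -M_n\cos(n\theta)$, which does not lie in the image since $M_n\ne 0$.

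The main obstacle is the non-resonance verification in (II): showing that $M_n>0$ and that the ratios $k(k^2-1)/M_k$ stay away from $n(n^2-1)/M_n$ across even indices $k\ne n$. This rests on a careful use of the Bessel inequalities \re{besseln} and \re{bessel7} to control the ratios $I_{k+1}(R_S)/I_k(R_S)$ uniformly in $k$, combined with the defining relation \re{RS} for $\widetilde{\sigma}$. Once (II) is settled, Theorem~\ref{CRthm} applied with $x_0=\cos(n\theta)$ delivers the bifurcation branch $\Gamma_{2,n}$; the claimed expansions of $\sigma_n,\widetilde{p}_n,p^*_n$ follow from \re{expands}--\re{expandps} with $S=\cos(n\theta)$, and the boundary parameterization $r=R_S+\varepsilon\cos(n\theta)+O(\varepsilon^2)$ is just the definition of $\Omega_\varepsilon$ along the branch.
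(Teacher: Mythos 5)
Your proposal follows essentially the same route as the paper: Fourier diagonalization of $\mathcal{F}_{\widetilde{R}}(\mu,0)$ via the explicit solutions of the linearized systems \re{eqn:sigma1}--\re{eqn:p1s}, the same expression for the linearization coefficients and for $M_n$, and the same reduction to checking the four Crandall-Rabinowitz hypotheses in $X_2^{3+\alpha}$. The one point you flag as the ``main obstacle''--- positivity of $M_n$ for $n\ge 2$ and simplicity of the kernel via strict separation of the $\mu_k$ --- is exactly where the paper relies on prior results (the inequality \re{ineq} from \cite{zhao2020impact,huang2019linear} and the monotonicity of $\mu_n$ from \cite{huang2021asymptotic,zhao2020symmetry}) rather than re-deriving them from \re{besseln} and \re{bessel7} as you suggest; your outline is correct but would need to actually carry out that Bessel-ratio estimate, which is not entirely routine.
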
}

\begin{proof}
    To begin with, for any $\mu>0$, the radially symmetric stationary solution $(\sigma_S,\widetilde{p}_S,p^*_S)$ exists when $\widetilde{R}(\theta)=0$ and remains independent of $\mu$. Consequently, the first assumption of Theorem \ref{CRthm}, namely that $\mathcal{F}(\mu,0)=0$, is satisfied.

By \re{estp}, the operator $\mathcal{F}$ maps from $\mathbb{R}^+\times X^{l+3+\alpha}$ to $X^{l+\alpha}$ (or from $\mathbb{R}^+\times X^{l+3+\alpha}_2$ to $X^{l+\alpha}_2$). Since the set $\{\cos(n\theta)\}_{n=0}^\infty$ is an orthonormal basis for $X^{l+\alpha}$, we use a Fourier series expression for $S(\theta)$:
\begin{equation}
    \label{S}
    S(\theta)=\sum_{n=0}^\infty a_n \cos(n\theta).
\end{equation}
Applying the separation of variables technique to the systems  \re{eqn:sigma1} -- \re{eqn:p1s} and utilizing the results from Lemma \ref{lem:deri}, we can solve $(\sigma_1,\widetilde{p}_1,p^*_1)$ explicitly by
\begin{gather}
    \label{sigma1}\sigma_1(r,\theta)= \sum_{n=0}^\infty a_n \sigma_{1,n}(r)\cos(n\theta) = \sum_{n=0}^\infty a_n \Big(-\frac{I_1(R_S)I_n(r)}{I_0(R_S)I_n(R_S)}\Big)\cos(n\theta),\\
    \label{p1t}\widetilde{p}_1(r,\theta)=\sum_{n=0}^\infty a_n \widetilde{p}_{1,n}(r)\cos(n\theta)=\sum_{n=0}^\infty a_n \frac{(n^2-1)r^n}{R_S^{n+2}} \cos(n\theta),\\
    \label{p1s}p^*_1(r,\theta)=\sum_{n=0}^\infty a_n p^*_{1,n}(r)\cos(n\theta) = \sum_{n=0}^\infty a_n \Big(-\frac{r^n I_1(R_S)}{R_S^n I_0(R_S)} + \frac{I_1(R_S)I_n(r)}{I_0(R_S)I_n(R_S)}\Big)\cos(n\theta).
\end{gather}
Differentiating \re{sigma1} -- \re{p1s} in $r$, and using properties of Bessel functions in Section 2.2, we obtain
\begin{equation*}
    \label{sigma1d} \frac{\p \sigma_1(r,\theta)}{\p r} = \sum_{n=0}^\infty a_n \frac{\p \sigma_{1,n}(r)}{\p r}\cos(n\theta) = \sum_{n=0}^\infty a_n\Big[ -\frac{I_1(R_S)}{I_0(R_S)I_n(R_S)}\Big(I_{n+1}(r)+\frac{n}{r}I_n(r)\Big)\Big] \cos(n\theta),
\end{equation*}
\begin{equation*}
    \label{p1dt} \frac{\p  \widetilde{p}_1(r,\theta)}{\p r} = \sum_{n=0}^\infty a_n \frac{\p \widetilde{p}_{1,n}(r)}{\p r}\cos(n\theta)= \sum_{n=0}^\infty a_n \frac{n(n^2-1) r^{n-1}}{R_S^{n+2}}\cos(n\theta),
\end{equation*}
and
\begin{equation*}
    \begin{aligned}\frac{\p p^*_1(r,\theta)}{\p r} &= \sum_{n=0}^\infty a_n \frac{\p p^*_{1,n}(r)}{\p r} \cos(n\theta)\\
    &= \sum_{n=0}^\infty a_n \Big[-\frac{nr^{n-1}I_1(R_S)}{R_S^n I_0(R_S)} + \frac{I_1(R_S)}{I_0(R_S)I_n(R_S)}\Big(I_{n+1}(r)+\frac{n}{r}I_n(r)\Big) \Big]\cos(n\theta).
    \end{aligned}
\end{equation*}
Hence, in \re{FreD},
\begin{equation}
    \label{p1d1}
    \begin{split}
    \frac{\p \widetilde{p}_1(R_S,\theta)}{\p r} + \mu\frac{\p p^*_1(R_S,\theta)}{\p r}  =\sum_{n=0}^\infty a_n \Big[\frac{n(n^2-1)}{R_S^3} + \mu\frac{I_1(R_S)I_{n+1}(R_S)}{I_0(R_S)I_n(R_S)}\Big] \cos(n\theta).
    \end{split}
\end{equation}
Substituting \re{dpt}, \re{dps2}, and \re{p1d1} into \re{FreD}, we have
\begin{equation}\label{Fre-d}
\begin{split}
    &\mathcal{F}_{\widetilde{R}}(\mu,0)[S(\theta)] \\
    =&\mu\Big(\frac{2I_1(R_S)}{R_SI_0(R_S)}-1\Big)\sum_{n=0}^\infty a_n\cos(n\theta) + \sum_{n=0}^\infty a_n \Big[\frac{n(n^2-1)}{R_S^3} + \mu\frac{I_1(R_S)I_{n+1}(R_S)}{I_0(R_S)I_n(R_S)}\Big] \cos(n\theta)\\
    =&\sum_{n=0}^\infty a_n \Big(\mu \frac{2I_1(R_S)}{R_SI_0(R_S)}-\mu + \frac{n(n^2-1)}{R_S^3} + \mu\frac{I_1(R_S)I_{n+1}(R_S)}{I_0(R_S)I_n(R_S)}\Big)\cos(n\theta),
\end{split}
\end{equation}
in particular,
\begin{equation}\label{Fre-d1}
\begin{split}
    \mathcal{F}_{\widetilde{R}}(\mu,0) [\cos(n\theta)] &= \Big(\mu \frac{2I_1(R_S)}{R_SI_0(R_S)}-\mu + \frac{n(n^2-1)}{R_S^3} + \mu\frac{I_1(R_S)I_{n+1}(R_S)}{I_0(R_S)I_n(R_S)}\Big)\cos(n\theta)\\
    &\triangleq \Big(-\mu M_n + \frac{n(n^2-1)}{R_S^3}\Big) \cos(n\theta),
\end{split}
\end{equation}
where
\begin{equation}
    \label{Mn}
    M_n = 1 - \frac{2I_1(R_S)}{R_S I_0(R_S)} - \frac{I_1(R_S)I_{n+1}(R_S)}{I_0(R_S)I_n(R_S)}.
\end{equation}
It was proved in \cite{zhao2020impact} (Lemma 4.1) and \cite{huang2019linear} (Lemma 3.3) that
\begin{equation}\label{ineq}
    1-\frac{2I_1(x)}{xI_0(x)} - \frac{I_1(x)I_{n+1}(x)}{I_0(x)I_n(x)}> 0\hspace{2em}\text{for } n\ge2\text{ and } x>0,
\end{equation}
thus
\begin{equation}
    \label{Mn-sign}
    M_n > 0  \hspace{2em} \text{for $n\ge 2$}.
\end{equation}
Furthermore, for $n=0$ (see Lemma 4.1 in \cite{zhao2020symmetry}),
$$\mathcal{F}_{\widetilde{R}}(\mu,0) [1 ]= \mu\Big(\frac{2I_1(R_S)}{R_SI_0(R_S)} -1 - \frac{I_1^2(R_S)}{I_0^2(R_S)}\Big) > 0;$$
and for $n=1$,
$$\mathcal{F}_{\widetilde{R}}(\mu,0) [\cos(\theta)] = \mu\Big(\frac{2I_1(R_S)}{R_SI_0(R_S)}  -1 +  \frac{I_2(R_S)}{I_0(R_S)}\Big) \cos(\theta)= 0,$$
implying that $\cos(\theta)$ is in the kernel of $\mathcal{F}_{\widetilde{R}}(\mu,0)$ for all $\mu$.
For $n\ge 2$, following \re{Mn-sign},  $\mathcal{F}_{\widetilde{R}}(\mu,0) [\cos(n\theta)]=0$ if and only if
\begin{equation*}
    \label{mu_n}\mu=\mu_n\triangleq \frac{n(n^2-1)}{R_S^3 M_n} .
\end{equation*}
It was proved in \cite{huang2021asymptotic,zhao2020symmetry} that $\mu_n$ is monotonically increasing in $n$ for $n\ge 2$. Therefore,
\begin{equation*}
    \text{Ker} \mathcal{F}_{\widetilde{R}}(\mu,0) = \left\{\begin{aligned}
        &\text{Span}\{\cos(\theta), \cos(n\theta)\} \hspace{2em}&\text{ if }\mu = \mu_n, \; n\ge 2;\\
        &\text{Span}\{\cos(\theta)\} \hspace{2em}&\text{if }\mu\neq \mu_n, \; n \ge 2.
    \end{aligned}\right.
\end{equation*}
To ensure that $\text{Ker} \mathcal{F}_{\widetilde{R}}(\mu,0)$ has a dimension of $1$, thereby meeting requirement (II) of Theorem \ref{CRthm}, we must exclude the case $n=1$. Consequently, we work with the space $X_2^{3+\alpha}$ defined in \re{bsp2}. For any even integer $n\ge2$,
\begin{gather*}
    \label{Ker}\text{Ker}\,\mathcal{F}_{\widetilde{R}}(\mu_n,0) = \text{Span}\{\cos(n\theta)\},\\
    \label{Im}\text{Im}\,\mathcal{F}_{\widetilde{R}}(\mu_n,0)=\text{Span}\{1,\cos(2\theta),\cos(4\theta),\cdots,\cos((n-2)\theta),\cos((n+2)\theta),\cdots\},
\end{gather*}
which meet the requirements (II) and (III) of Theorem \ref{CRthm}. Finally, by differentiating \re{Fre-d1} in $\mu$, we obtain
\begin{equation}\label{F-mu-r}
    \mathcal{F}_{\mu\widetilde{R}}(\mu_n,0) [1,\cos(n\theta)] = -M_n \cos(n\theta) \notin \text{Im }\mathcal{F}_{\widetilde{R}}(\mu_0,0),
\end{equation}
which fulfills the last requirement of Theorem \ref{CRthm}.

In summary, all the requirements of the Crandall-Rabinowitz Theorem (Theorem \ref{CRthm}) are satisfied at $(\mu_n,0)$ for even integers $n\ge 2$. Therefore, for each such $n$, $(\mu_n,0)$ is a bifurcation point of System $(\mathcal{A})$, and the conclusions in Theorem \ref{main1} hold.
\end{proof}

\begin{remark}\label{main2}
The bifurcation result is actually valid for all integers $n\ge2$ not restricting to even $n$ only. For any odd $n\ge 3$, we may work with the Banach space
$$M^{l+\alpha}=\text{closure of the linear space spanned by } \{\cos(j\theta),\;j=0,2,3,4,5,\cdots\} \text{ in } X^{l+\alpha}$$
and apply the Crandall-Rabinowitz theorem in a more delicate manner. The issue here is that $\mathcal{F}$ does not map $M^{l+3+\alpha}$ into $M^{l+\alpha}$, but we could shift the center of the system to eliminate the mode $n=1$ and make a modified mapping $\widetilde{\mathcal{F}}$ which maps $M^{l+3+\alpha}$ into $M^{l+\alpha}$. In \cite{pan2022symmetry}, the authors utilized group-theoretic methods by selecting appropriate isotropy subgroups to demonstrate that the bifurcation results are also valid for odd integers $n\ge2$. Although the results are derived in a 3-D context, similar methods could be applicable to the 2-D case. However, the details of the analysis, which involve complex computations, will not be discussed here.
\end{remark}

\section{The type of bifurcation points}
By Theorem \ref{main1} and Remark \ref{main2}, we know that, for every integer $n\ge 2$, the bifurcating solution $(\mu_n(\varepsilon),\widetilde{R}_n(\varepsilon))$ satisfies
\begin{gather*}
    \mathcal{F}(\mu_n(\varepsilon),\widetilde{R}_n(\varepsilon))=0,\\
    (\mu_n(0),\widetilde{R}_n(0)) = (\mu_n,0),\\
    \widetilde{R}_n(\varepsilon) =\varepsilon \cos(n\theta) + O(\varepsilon^2).
\end{gather*}
In order to determine the type of the bifurcation, we need to compute $\mu_n'(0)$ by using \re{mu-d}. To do that, we need $\varepsilon^2$-order expansion, and we formally write (without the subscript $n$):
\begin{gather}
    \label{hexpand1} \sigma(r,\theta) = \sigma_S(r) + \varepsilon\sigma_1(r,\theta) + \varepsilon^2 \sigma_2(r,\theta) + O(\varepsilon^3),\\
    \label{hexpand2} \widetilde{p}(r,\theta) = \widetilde{p}_S(r) +\varepsilon \widetilde{p}_1(r,\theta) +\varepsilon^2 \widetilde{p}_2(r,\theta) + O(\varepsilon^3),\\
    \label{hexpand3} p^*(r,\theta) = \widetilde{p}_S(r) +\varepsilon p^*_1(r,\theta) +\varepsilon^2 p^*_2(r,\theta) + O(\varepsilon^3),
\end{gather}

We also establish a refined formula for the mean curvature $\kappa$ on of the free boundary.

\begin{lem}\label{kappalem}
    If $\p\Omega_\varepsilon:\, r=R_S+\varepsilon S(\theta)$, where $S\in C^2(\Sigma)$, then
    \begin{equation}\label{kappa2}
    \kappa\Big|_{r=R_S+\varepsilon S(\theta)} = \frac{1}{R_S} -\varepsilon\frac{S+S_{\theta\theta}}{R_S^2} + \varepsilon^2 \frac{2S S_{\theta\theta} + S^2 + \frac12 S_\theta^2}{R_S^3}
    + O(\varepsilon^3).
    \end{equation}
\end{lem}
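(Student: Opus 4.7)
The plan is to compute the curvature of the polar curve $r(\theta)=R_S+\varepsilon S(\theta)$ directly from the standard formula
\[
\kappa=\frac{r^2+2r_\theta^2-r\, r_{\theta\theta}}{(r^2+r_\theta^2)^{3/2}},
\]
and then Taylor-expand the resulting rational expression in $\varepsilon$ up to order $\varepsilon^2$. The first-order formula in \eqref{kappa} is already recorded in the text, so the real work is isolating and simplifying the $\varepsilon^2$ coefficient.

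First I would substitute $r=R_S+\varepsilon S$, $r_\theta=\varepsilon S_\theta$, $r_{\theta\theta}=\varepsilon S_{\theta\theta}$ into the numerator and denominator. The numerator expands as
\[
N(\varepsilon)=R_S^2+\varepsilon(2R_S S-R_S S_{\theta\theta})+\varepsilon^2\bigl(S^2+2S_\theta^2-S\,S_{\theta\theta}\bigr),
\]
which is exact (no higher-order terms). The denominator is $R_S^3(1+u)^{3/2}$ with
\[
u=\frac{2\varepsilon S}{R_S}+\frac{\varepsilon^2(S^2+S_\theta^2)}{R_S^2}.
\]

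Next I would apply the Taylor expansion $(1+u)^{-3/2}=1-\tfrac{3}{2}u+\tfrac{15}{8}u^2+O(u^3)$, keeping only terms up to $\varepsilon^2$; here $u^2=\tfrac{4\varepsilon^2 S^2}{R_S^2}+O(\varepsilon^3)$, so
\[
(1+u)^{-3/2}=1-\frac{3\varepsilon S}{R_S}+\frac{3\varepsilon^2(4S^2-S_\theta^2)}{2R_S^2}+O(\varepsilon^3).
\]
Multiplying $N(\varepsilon)/R_S^2$ by this series and collecting by powers of $\varepsilon$, the $\varepsilon^1$ terms cleanly recombine to $-(S+S_{\theta\theta})/R_S^2$ (recovering \eqref{kappa}), and the $\varepsilon^2$ coefficient is
\[
\frac{1}{R_S^3}\Bigl[\tfrac{3}{2}(4S^2-S_\theta^2)+(2S-S_{\theta\theta})(-3S)+S^2+2S_\theta^2-S\,S_{\theta\theta}\Bigr],
\]
which simplifies to $(2SS_{\theta\theta}+S^2+\tfrac{1}{2}S_\theta^2)/R_S^3$, matching \eqref{kappa2}.

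The argument is almost entirely routine bookkeeping; the only mild obstacle is making sure no $\varepsilon^2$ contribution is missed, which requires tracking three distinct sources: (i) the $\varepsilon^2$ piece of the numerator $N(\varepsilon)$, (ii) the cross term from multiplying the $\varepsilon$ part of $N$ with the $\varepsilon$ part of $(1+u)^{-3/2}$, and (iii) the genuine $\varepsilon^2$ contribution from $(1+u)^{-3/2}$, which itself has two pieces coming from $-\tfrac{3}{2}u$ and $\tfrac{15}{8}u^2$. Once these are summed and the algebra carried out, the remainder in $O(\varepsilon^3)$ is immediate since $S\in C^2(\Sigma)$ guarantees bounded $S$, $S_\theta$, $S_{\theta\theta}$ so that the neglected tail of $(1+u)^{-3/2}$ is uniformly $O(\varepsilon^3)$ in $\theta$.
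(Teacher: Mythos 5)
Your proof is correct and follows essentially the same route as the paper: start from the polar curvature formula, substitute $\rho = R_S + \varepsilon S$, and Taylor-expand to order $\varepsilon^2$. The only organizational difference is cosmetic — the paper factors $\rho^2$ out of the numerator and expands three separate series in $\varepsilon$, whereas you keep the numerator as an exact quadratic in $\varepsilon$ and absorb all of the denominator's $\varepsilon$-dependence into a single $(1+u)^{-3/2}$ expansion — and your algebra for the $\varepsilon^2$ coefficient checks out and reproduces \eqref{kappa2}.
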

\begin{proof}
    Using the mean curvature formula in the two-dimensional case for a curve $r=\rho(\theta)$:
    \begin{equation}\label{kappa-c1}
    \begin{split}
        \kappa = \frac{\rho^2 + 2\rho_\theta^2 - \rho \cdot \rho_{\theta\theta}}{(\rho^2 + \rho_\theta^2)^{3/2}} = \frac{1+2(\frac{\rho_\theta}{\rho})^2-\frac{\rho_{\theta\theta}}{\rho}}{\rho(1+(\frac{\rho_\theta}{\rho})^2)^{3/2}}=\frac{1}{\rho} \bigg(1+2\Big(\frac{\rho_\theta}{\rho}\Big)^2-\frac{\rho_{\theta\theta}}{\rho}\bigg)\bigg(1+\Big(\frac{\rho_\theta}{\rho}\Big)^2\bigg)^{-\frac32}.
    \end{split}
    \end{equation}
Taking $\rho(\theta)=R_S+\varepsilon S(\theta)$, we have $\rho_\theta =\varepsilon S_\theta$ and $\rho_{\theta\theta} = \varepsilon S_{\theta\theta}$. Then
\begin{equation*}
    \begin{split}
        \frac{1}{\rho} &= \frac{1}{R_S + \varepsilon S} = \frac{1}{R_S} - \varepsilon\frac{S}{R_S^2} + \varepsilon^2 \frac{S^2}{R_S^3} - \varepsilon^3 \frac{S^3}{R_S^4} + O(\varepsilon^4),\\
        \frac{\rho_\theta}{\rho}  &= \varepsilon \frac{S_\theta}{R_S} -\varepsilon^2 \frac{S S_\theta}{R_S^2} + \varepsilon^3\frac{S^2 S_\theta}{R_S^3} + O(\varepsilon^4),\\
        \Big(\frac{\rho_\theta}{\rho}\Big)^2  &= \varepsilon^2 \frac{S_\theta^2}{R_S^2} - \varepsilon^3\frac{2S S_\theta^2}{R_S^3} +O(\varepsilon^4),\\
        \frac{\rho_{\theta\theta}}{\rho}  &= \varepsilon \frac{S_{\theta\theta}}{R_S} -\varepsilon^2 \frac{S S_{\theta\theta}}{R_S^2} + \varepsilon^3\frac{S^2 S_{\theta\theta}}{R_S^3} + O(\varepsilon^4),
    \end{split}
\end{equation*}
and
\begin{equation*}
\begin{split}
        \bigg(1+2\Big(\frac{\rho_\theta}{\rho}\Big)^2-\frac{\rho_{\theta\theta}}{\rho}\bigg) &= 1 - \varepsilon \frac{S_{\theta\theta}}{R_S}+ \varepsilon^2 \frac{2S_\theta^2 + S S_{\theta\theta}}{R_S^2} - \varepsilon^3\frac{4S S_\theta^2+S^2 S_{\theta\theta}}{R_S^3} + O(\varepsilon^4),\\
        \bigg(1+\Big(\frac{\rho_\theta}{\rho}\Big)^2\bigg)^{-\frac32} &= 1 -\frac{3}{2}\Big(\frac{\rho_\theta}{\rho}\Big)^2 + O(\varepsilon^4) = 1-\varepsilon^2 \frac32\frac{S_\theta^2}{R_S^2} + \varepsilon^3 \frac{3 S S_\theta^2}{R_S^3} + O(\varepsilon^4).
        \end{split}
\end{equation*}
Putting these into \re{kappa-c1}, we obtain
\begin{equation*}
    \begin{split}
        \kappa =& \bigg(\frac{1}{R_S} - \varepsilon\frac{S}{R_S^2} + \varepsilon^2 \frac{S^2}{R_S^3} - \varepsilon^3 \frac{S^3}{R_S^4} + O(\varepsilon^4)\bigg) \bigg(1 - \varepsilon \frac{S_{\theta\theta}}{R_S}+ \varepsilon^2 \frac{2S_\theta^2 + S S_{\theta\theta}}{R_S^2} \\
        &\;- \varepsilon^3\frac{4S S_\theta^2+S^2 S_{\theta\theta}}{R_S^3}+O(\varepsilon^4) \bigg)\bigg(1-\varepsilon^2 \frac32\frac{S_\theta^2}{R_S^2} + \varepsilon^3 \frac{3 S S_\theta^2}{R_S^3} + O(\varepsilon^4)\bigg)\\
        =&\frac{1}{R_S} -\varepsilon\frac{S+S_{\theta\theta}}{R_S^2} + \varepsilon^2 \frac{2S S_{\theta\theta} + S^2 + \frac12 S_\theta^2}{R_S^3} - \varepsilon^3 \frac{S^3+\frac32 S S_\theta^2 + 3S^2 S_{\theta\theta} - \frac32 S_\theta^2 S_{\theta\theta}}{R_S^4} + O(\varepsilon^4),
    \end{split}
\end{equation*}
which gives the estimate \re{kappa2}.
\end{proof}

Given our focus on the type of bifurcation at a bifurcation point $\mu=\mu_n$, we will henceforth consider the special case where $S(\theta)=\cos(n\theta)$ and $\mu=\mu_n$ for $n\ge 2$, where $\mu_n$ is given in Theorem \ref{main1}. Each $\cos(n\theta)$ corresponds to a bifurcation branch emanating from $(\mu_n,0)$.

Substituting the refined expansion \re{hexpand1} into \re{ps1} and \re{ps3} and collecting the $\varepsilon^2$-order terms, we derive the system for $\sigma_2$:
\begin{equation}\label{eqn:sigma2}
    \left\{
    \begin{aligned}
        &-\Delta \sigma_2 + \sigma_2 = 0 \hspace{2em} &\text{in } B_{R_S},\\
        &\sigma_2(R_S,\theta) = -\frac12 \frac{\p^2 \sigma_S(R_S)}{\p r^2}S^2(\theta) - \frac{\p \sigma_1(R_S,\theta)}{\p r}S(\theta) \hspace{2em}&\text{on } \p B_{R_S}.
    \end{aligned}
    \right.
\end{equation}
Note that the general solution to the Poisson equation $-\Delta u + u =0$ in a disk is given by
$$u(r,\theta)= \sum_{n=0}^\infty A_n I_n(r)\cos(n\theta),$$
where the coefficients $A_n$ are determined by the boundary condition. We will next simplify the boundary condition as specified in \re{eqn:sigma2}. Recall that when $S(\theta) = \cos(n\theta)$,
\begin{equation}
    \label{sol:s1}
    \sigma_1(r,\theta)=\sigma_{1,n}(r)\cos(n\theta) = -\frac{I_1(R_S)I_n(r)}{I_0(R_S)I_n(R_S)}\cos(n\theta),
\end{equation}
hence
\begin{equation}
    \label{deri:s1}
    \frac{\p \sigma_1(R_S,\theta)}{\p r}= - \Big(\frac{I_1(R_S)I_{n+1}(R_S)}{I_0(R_S)I_n(R_S)} + \frac{n I_1(R_S)}{R_SI_0(R_S)} \Big)\cos(n\theta).
\end{equation}
By combining this with \eqref{ds2} and utilizing the formula of $M_n$ in \eqref{Mn}, we obtain
\begin{equation}\label{Mntilde}
    \begin{split}
        \sigma_2(R_S,\theta) &= -\frac12\Big(1-\frac{I_1(R_S)}{R_S I_0(R_S)}\Big)\cos^2(n\theta)+ \Big(\frac{I_1(R_S)I_{n+1}(R_S)}{I_0(R_S)I_n(R_S)} + \frac{n I_1(R_S)}{R_SI_0(R_S)} \Big)\cos^2(n\theta)\\
        &= \Big(\frac{2n+1}{2}\frac{I_1(R_S)}{R_S I_0(R_S)}-\frac12 + \frac{I_1(R_S)I_{n+1}(R_S)}{I_0(R_S)I_n(R_S)}\Big)\cos^2(n\theta)\\
        &= \Big(\frac12 + \frac{2n-3}2\frac{I_1(R_S)}{R_S I_0(R_S)} - M_n\Big) \cos^2(n\theta) \triangleq \widetilde{M}_n \cos^2(n\theta).
    \end{split}
\end{equation}
Using the double-angle formula
$$\cos^2(n\theta) = \frac{1+\cos(2n\theta)}{2},$$
we find that the boundary term $\sigma_2(R_S,\theta)$ is a linear combination of $1$ and $\cos(2n\theta)$. Therefore, the solution to \re{eqn:sigma2} should be a linear combination of $I_0(r)$ and $I_{2n}(r)\cos(2n\theta)$. With this insight, we solve \re{eqn:sigma2} as
\begin{equation}
    \label{sol:sigma2}
    \begin{split}
    \sigma_2(r,\theta) =\; \frac{\widetilde{M}_n}2 \frac{I_0(r)}{I_0(R_S)} + \frac{\widetilde{M}_n}2  \frac{I_{2n}(r)}{I_{2n}(R_S)}\cos(2n\theta).
    \end{split}
\end{equation}
To facilitate subsequent calculations, it is necessary to compute the term $\frac{\p \sigma_2(R_S,\theta)}{\p r}$. We proceed with this computation as follows:
\begin{equation}
    \begin{split}
        &\frac{\p \sigma_2(r,\theta)}{\p r} = \frac{\widetilde{M}_n}2 \frac{I_1(r)}{I_0(R_S)} +\frac{\widetilde{M}_n}2\Big(\frac{I_{2n+1}(r)}{I_{2n}(R_S)} + \frac{2n I_{2n}(r)}{r I_{2n}(R_S)}\Big)\cos(2n\theta),
    \end{split}
\end{equation}
hence
\begin{equation}\label{dersigma2}
    \begin{split}
        &\frac{\p \sigma_2(R_S,\theta)}{\p r} = \frac{\widetilde{M}_n}2\frac{I_1(R_S)}{I_0(R_S)} + \frac{\widetilde{M}_n}2 \Big(\frac{I_{2n+1}(R_S)}{I_{2n}(R_S)} + \frac{2n }{R_S }\Big)\cos(2n\theta).
    \end{split}
\end{equation}

In a similar manner, by substituting \re{hexpand2} and \re{hexpand3} into \re{eqn:pt} and \re{eqn:ps} and using Lemma \ref{kappalem}, we obtain the following systems for $\widetilde{p}_2$ and $p^*_2$:
\begin{equation}\label{eqn:p2t}
    \left\{
    \begin{aligned}
        &-\Delta \widetilde{p}_2 =0 \hspace{2em} &\text{in } B_{R_S},\\
        &\widetilde{p}_2(R_S,\theta) =\frac{2S S_{\theta\theta}+S^2 + \frac12 S_\theta^2}{R_S^3} -\frac12 \frac{\p^2 \widetilde{p}_S(R_S)}{\p r^2}  S^2 -\frac{\p \widetilde{p}_1(R_S,\theta)}{\p r}S
       \hspace{2em}&\text{on } \p B_{R_S},
    \end{aligned}
    \right.
\end{equation}
\begin{equation}\label{eqn:p2s}
    \left\{
    \begin{aligned}
        &-\Delta p^*_2 = \sigma_2 \hspace{2em} &\text{in } B_{R_S},\\
        &\widetilde{p}_2(R_S,\theta) = -\frac12\frac{\p^2 p^*_S(R_S)}{\p r^2}S^2 - \frac{\p p^*_1(R_S,\theta)}{\p r}S \hspace{2em}&\text{on } \p B_{R_S}.
    \end{aligned}
    \right.
\end{equation}
When $S(\theta)=\cos(n\theta)$, it follows from Section 3 that
\begin{gather}
    \label{sol:p1t} \widetilde{p}_1(r,\theta)= \widetilde{p}_{1,n}(r)\cos(n\theta)= \frac{(n^2-1)r^n}{R_S^{n+2}}\cos(n\theta),\\
    \label{sol:p1s} p^*_1(r,\theta) = p^*_{1,n}(r)\cos(n\theta)=\Big(-\frac{I_1(R_S)r^n}{R_S^n I_0(R_S)} + \frac{I_1(R_S)I_n(r)}{I_0(R_S)I_n(R_S)}\Big)\cos(n\theta),
\end{gather}
hence
\begin{gather}
    \label{deri:p1t} \frac{\p \widetilde{p}_1(R_S,\theta)}{\p r}= \frac{n(n^2-1)}{R_S^3}\cos(n\theta) ,\\
    \label{deri:p1s} \frac{\p p^*_1(R_S,\theta)}{\p r} = \frac{I_1(R_S)I_{n+1}(R_S)}{I_0(R_S)I_n(R_S)}\cos(n\theta).
\end{gather}

We first solve for $\widetilde{p}_2$. Substituting \re{dpt} and \re{deri:p1t} into the boundary condition in the system \re{eqn:p2t} and using the following equations,
\begin{gather*}
    S^2=\cos^2(n\theta)=\frac{1}{2}\big(1+\cos(2n\theta)\big),\\
    S S_{\theta\theta} = -n^2 \cos^2(n\theta) = -\frac{n^2}{2}\big(1+\cos(2n\theta)\big),\\
    S_\theta^2 =n^2\sin^2(n\theta) = \frac{n^2}{2}\big(1-\cos(2n\theta)\big),
\end{gather*}
we simplify the boundary condition to:
\begin{equation}
    \label{bdy:p2t}
    \begin{aligned}
        \widetilde{p}_2(R_S,\theta) =&\; \frac{1}{R_S^3}\Big[\Big(\frac12 - \frac{3n^2}4\Big) + \Big(\frac12-\frac{5n}4\Big)\cos(2n\theta)\Big] - \frac{n(n^2-1)}{R_S^3}\frac{1+\cos(2n\theta)}{2}\\
        =&\; \frac{1}{R_S^3}\Big(\frac12 - \frac{3n^2}4 - \frac{n(n^2-1)}{2}\Big)  + \frac{1}{R_S^3}\Big(\frac12-\frac{5n}4-\frac{n(n^2-1)}{2}\Big)\cos(2n\theta).
    \end{aligned}
\end{equation}
We observe that the boundary condition is a linear combination of $1$ and $\cos(2n\theta)$. Recalling the general solution to the Laplace equation $\Delta u = 0$ in a disk is
$$u(r,\theta)=\sum_{n=0}^\infty B_n r^n \cos(n\theta),$$
we note that the coefficients $B_n$ are determined by the boundary condition \re{bdy:p2t}. As a result, $\widetilde{p}_2$ is solved as
\begin{equation}
    \label{sol:p2t}
    \widetilde{p}_2(r,\theta)=\frac{1}{R_S^3}\Big(\frac12 - \frac{3n^2}4 - \frac{n(n^2-1)}{2}\Big) + \frac{r^{2n}}{R_S^{2n+3}}\Big(\frac12-\frac{5n}4-\frac{n(n^2-1)}{2}\Big)\cos(2n\theta).
\end{equation}
Since the first part of \re{sol:p2t} is a constant, it vanishes upon taking derivatives. Hence, we have
\begin{equation}
    \label{d:p2}
        \frac{\p \widetilde{p}_2(R_S,\theta)}{\p r} = \frac{2n}{R_S^4}\Big(\frac12-\frac{5n}4-\frac{n(n^2-1)}{2}\Big)\cos(2n\theta).
\end{equation}

Next, we proceed to solve for $p^*_2$ from the system \re{eqn:p2s}. We observe that
\begin{equation}\label{eqn:sp2}
    \left\{
    \begin{aligned}
        &\Delta(p^*_2+\sigma_2)=0  &\text{in } B_{R_S},\\
        &(p^*_2 + \sigma_2)(R_S,\theta) =  -\frac12\Big(\frac{\p^2 p^*_S(R_S)}{\p r^2}+\frac{\p^2 \sigma_S(R_S)}{\p r^2}\Big)S^2 - \Big(\frac{\p p^*_1(R_S,\theta)}{\p r}+ \frac{\p \sigma_1(R_S,\theta)}{\p r}\Big)S &\text{on } \p B_{R_S},
    \end{aligned}
    \right.
\end{equation}
where, by \re{ds2}, \re{dps2}, \re{deri:s1}, and \re{deri:p1s}, the boundary condition can be simplified as
\begin{equation*}
\begin{split}
    (p^*_2 + \sigma_2)(R_S,\theta) &= -\frac12 \frac{I_1(R_S)}{R_S I_0(R_S)}\frac{1+\cos(2n\theta)}{2} + \frac{n I_1(R_S)}{R_S I_0(R_S)}\frac{1+\cos(2n\theta)}{2}\\
    &= \frac{2n-1}{{\color{black}4}}\frac{I_1(R_S)}{R_S I_0(R_S)} + \frac{2n-1}{{\color{black}4}}\frac{I_1(R_S)}{R_S I_0(R_S)}\cos(2n\theta).
\end{split}
\end{equation*}
Reemploying the general solution of the Laplace equation, we have
\begin{equation}
    \label{sol:p2s}
    p^*_2(r,\theta) = \frac{2n-1}{{\color{black}4}}\frac{I_1(R_S)}{R_S I_0(R_S)} + \frac{r^{2n}}{R_S^{2n}}\frac{2n-1}{{\color{black}4}}\frac{I_1(R_S)}{R_S I_0(R_S)}\cos(2n\theta) - \sigma_2(r,\theta),
\end{equation}
hence
\begin{equation}
    \label{deri:p2s}
    \frac{\p p^*_2(R_S,\theta)}{\p r} = \frac{n(2n-1)}{ {\color{black}2}} \frac{I_1(R_S)}{R_S^2 I_0(R_S)}\cos(2n\theta) - \frac{\p \sigma_2(R_S,\theta)}{\p r},
\end{equation}
where $\frac{\p \sigma_2(R_S,\theta)}{\p r}$ was computed in \re{dersigma2}.

At this point, we shall rigorously establish the $\varepsilon^2$-order expansion for $\sigma$ and $p$ in the case $\p \Omega_\varepsilon: r=R_S+\varepsilon \cos(n\theta)$.

\begin{lem}\label{est2}
    Assume that $(\sigma,\widetilde{p},p^*)$ is the solution to System $(\mathcal{S})$ in the domain whose boundary is defined by $\p \Omega_\varepsilon: r=R_S + \varepsilon \cos(n\theta)$; $(\sigma_1,\widetilde{p}_1,p^*_1)$ is the solution to the system \re{eqn:sigma1} -- \re{eqn:p1s}, and the explicit solutions $\sigma_1$, $\widetilde{p}_1$, and $p^*_1$ are given in \re{sol:s1}, \re{sol:p1t}, and \re{sol:p1s}, respectively; and   $\sigma_2$, $\widetilde{p}_2$, and $p^*_2$ are solutions to the systems \re{eqn:sigma2}, \re{eqn:p2t}, and \re{eqn:p2s}, respectively, and are explicitly given by \re{sol:sigma2}, \re{sol:p2t}, and \re{sol:p2s}. Then,
    \begin{gather}
        \|\sigma-(\sigma_S + \varepsilon \sigma_1 + \varepsilon^2 \sigma_2)\|_{C^{3+\alpha}(\overline{\Omega}_\varepsilon)} \le C|\varepsilon|^3,\label{est:sigma2}\\
        \big\|\big(\widetilde{p}+\mu p^*\big)-\big(\widetilde{p}_S + \mu p^*_S + \varepsilon (\widetilde{p}_1+\mu p^*_1) + \varepsilon^2 (\widetilde{p}_2 + \mu p^*_2)\big)\big\|_{C^{1+\alpha}(\overline{\Omega}_\varepsilon)} \le C|\varepsilon|^3.\label{est:p2}
    \end{gather}
\end{lem}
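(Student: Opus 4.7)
The plan is to define remainder functions directly on $\Omega_\varepsilon$ and reduce the two estimates to elliptic Schauder theory applied to linear boundary value problems with $O(|\varepsilon|^3)$ data. Since $\sigma_1, \sigma_2, \widetilde{p}_1, \widetilde{p}_2, p^*_1, p^*_2$ are all given by the explicit expressions in \re{sol:s1}--\re{sol:p1s} and \re{sol:sigma2}, \re{sol:p2t}, \re{sol:p2s}, they extend real-analytically to a neighborhood of $\overline{B}_{R_S}$ that contains $\overline{\Omega}_\varepsilon$ for $|\varepsilon|$ sufficiently small, so the remainders
\begin{gather*}
U_\sigma := \sigma - (\sigma_S + \varepsilon \sigma_1 + \varepsilon^2 \sigma_2),\\
U_p := (\widetilde{p} + \mu p^*) - \big(\widetilde{p}_S + \mu p^*_S + \varepsilon(\widetilde{p}_1 + \mu p^*_1) + \varepsilon^2(\widetilde{p}_2 + \mu p^*_2)\big)
\end{gather*}
make sense on all of $\Omega_\varepsilon$.

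For $U_\sigma$, combining \re{ps1}, \re{rss1}, \re{eqn:sigma1}, \re{eqn:sigma2} shows that the interior equation is homogeneous: $-\Delta U_\sigma + U_\sigma = 0$ on $\Omega_\varepsilon$. The boundary residual
\[
U_\sigma|_{\partial\Omega_\varepsilon} = 1 - (\sigma_S + \varepsilon\sigma_1 + \varepsilon^2 \sigma_2)\big|_{r=R_S+\varepsilon\cos(n\theta)}
\]
is handled by Taylor expanding each summand in $r$ around $R_S$ up to third order in $\varepsilon$; regrouping in powers of $\varepsilon$, the constant coefficient vanishes because $\sigma_S(R_S)=1$, the $\varepsilon$-coefficient vanishes by the boundary condition in \re{eqn:sigma1}, and the $\varepsilon^2$-coefficient vanishes by the boundary condition in \re{eqn:sigma2} (which was engineered precisely for this purpose in \re{Mntilde}). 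Hence $U_\sigma|_{\partial\Omega_\varepsilon} = O(|\varepsilon|^3)$ in $C^{3+\alpha}(\partial\Omega_\varepsilon)$, and the Schauder estimate for the Helmholtz equation on the $C^{3+\alpha}$ domain $\Omega_\varepsilon$ yields \re{est:sigma2}.

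For $U_p$, linearity together with \re{ps2}, \re{rss2}, \re{eqn:p1s}, \re{eqn:p2s} (and the harmonicity of $\widetilde{p}_S+\varepsilon \widetilde{p}_1 + \varepsilon^2 \widetilde{p}_2$) gives
\[
-\Delta U_p = \mu\, U_\sigma \quad \text{in } \Omega_\varepsilon,
\]
which is $O(|\varepsilon|^3)$ by the previous step. For the boundary residual I will Taylor-expand the approximate pressure in $r$ around $R_S$ (using the derivative values from Lemma \ref{lem:deri}) and, crucially, expand the curvature using the refined third-order formula \re{kappa2} of Lemma \ref{kappalem}; the $\varepsilon^0$, $\varepsilon^1$, and $\varepsilon^2$ coefficients then cancel by the boundary conditions of \re{eqn:pst}--\re{eqn:pss}, \re{eqn:p1t}--\re{eqn:p1s}, and \re{eqn:p2t}--\re{eqn:p2s}, each of which was designed to enforce a match at its respective order. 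The residual boundary data is therefore $O(|\varepsilon|^3)$ in $C^{1+\alpha}(\partial\Omega_\varepsilon)$, and Schauder theory for the Poisson problem delivers \re{est:p2}.

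The main obstacle is the $\varepsilon^2$-cancellation on the boundary for $U_p$: one must carefully combine the second-order Taylor remainder of $\widetilde{p}_S+\mu p^*_S$, the first-order Taylor remainder of $\varepsilon(\widetilde{p}_1+\mu p^*_1)$, and the boundary value of $\varepsilon^2(\widetilde{p}_2+\mu p^*_2)$, and verify that the sum matches the $\varepsilon^2$-coefficient $\frac{2SS_{\theta\theta}+S^2+\frac12 S_\theta^2}{R_S^3}$ of $\kappa$. By construction this reduces to an algebraic identity, but the bookkeeping is tedious. The weaker $C^{1+\alpha}$ regularity for $U_p$ (compared to $C^{3+\alpha}$ for $U_\sigma$) reflects the fact that the curvature boundary data involves two derivatives of the boundary parametrization, costing two orders of H\"older regularity compared to the Dirichlet data for $\sigma$.
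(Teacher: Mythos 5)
Your proposal is correct and follows essentially the same route as the paper's own proof: define the remainders (the paper calls them $\Psi_1,\Psi_2$), observe they satisfy the homogeneous Helmholtz equation and the Poisson equation $-\Delta\Psi_2=\mu\Psi_1$ respectively because the expansion functions solve the same interior PDEs, show the boundary residuals are $O(|\varepsilon|^3)$ by Taylor expansion (using Lemma~\ref{kappalem} for the curvature term), and conclude by Schauder estimates.
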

\begin{proof}
    We first prove the estimate for $\sigma$. Note that $\sigma_S$, $\sigma_1$, and $\sigma_2$ are defined (by their explicit formulas) for all $r>0$, and they satisfy the same elliptic equations $-\Delta u + u = 0$ for all $r>0$. So, if we denote $\Psi_1=\sigma-(\sigma_S + \varepsilon \sigma_1 + \varepsilon^2 \sigma_2)$, then we have
    $$-\Delta \Psi_1 -\Psi_1 = 0 \hspace{2em}\text{in } \Omega_\varepsilon.$$
    We then check the boundary condition, on $\p\Omega_\varepsilon$,
    \begin{equation*}
        \begin{split}
            \Psi_1\Big|_{\p \Omega_\varepsilon} =& \Big(\sigma-(\sigma_S + \varepsilon \sigma_1 + \varepsilon^2 \sigma_2)\Big)\Big|_{r=R_S+\varepsilon \cos(n\theta)} \\
            =&1 - \sigma_S(R_S) - \frac{\p \sigma_S(R_S)}{\p r}\varepsilon\cos(n\theta) - \frac{\p^2 \sigma_S(R_S)}{\p r^2}\varepsilon^2 \cos^2(n\theta)-\varepsilon\sigma_1(R_S,\theta) \\
            &-\varepsilon\frac{\sigma_1(R_S,\theta)}{\p r}\varepsilon\cos(n\theta) -\varepsilon^2\sigma_2(R_S,\theta) + O(\varepsilon^3).
        \end{split}
    \end{equation*}
    Using \re{rss3}, \re{eqn:sigma1}, and \re{eqn:sigma2}, we find that all the lower-order terms cancel out, and $\Psi_1|_{\p \Omega_\varepsilon} = O(\varepsilon^3)$. Then the inequality \re{est:sigma2} follows by applying the Schauder estimates on $\Psi_1$.

    The proof to \re{est:p2} is similar. Let
    $$\Psi_2 = \big(\widetilde{p}+\mu p^*\big)-\big(\widetilde{p}_S + \mu p^*_S + \varepsilon (\widetilde{p}_1+\mu p^*_1) + \varepsilon^2 (\widetilde{p}_2 + \mu p^*_2)\big).$$
    Combining \re{eqn:pst}, \re{eqn:pss}, \re{eqn:pt}, \re{eqn:ps}, \re{eqn:p1t}, \re{eqn:p1s}, \re{eqn:p2t}, and \re{eqn:p2s}, we obtain
    $$-\Delta \Psi_2 = \mu\Psi_1\hspace{2em}\text{in } \Omega_\varepsilon,$$
    and on the boundary $\p \Omega_\varepsilon$, after canceling terms that arise from the Taylor series expansions,
    $$\Psi_2\Big|_{\p \Omega_\varepsilon} = \kappa\Big|_{r=R_S+\varepsilon \cos(n\theta)} - \frac{1}{R_S} - \varepsilon\frac{(n^2-1)\cos(n\theta)}{R_S^2}- \varepsilon^2\frac{(1-2n^2)\cos^2(n\theta)+\frac{n^2}2\sin^2(n\theta)}{R_S^3} + O(\varepsilon^3).$$
    By Lemma \ref{kappalem}, $\Psi_2|_{\p \Omega_\varepsilon} = O(\varepsilon^3)$ and the inequality \re{est:p2} follows immediately from \re{est:sigma2} and the Schauder theory.
\end{proof}

Based on Lemma \ref{est2}, we are now able to compute the second-order Fr\'echet derivative $\mathcal{F}_{\widetilde{R}\widetilde{R}}$ in \re{mu-d}. By \re{F1}, we have
\begin{equation}
    \mathcal{F}(\mu,\varepsilon\cos(n\theta)) = \Big(\frac{\p \widetilde{p}}{\p n} + \mu \frac{\p p^*}{\p n}\Big)\Big|_{r=R_S+\varepsilon \cos(n\theta)} = \Big(\nabla \widetilde{p} \cdot \vec{n} + \mu\nabla p^* \cdot \vec{n}\Big)\Big|_{r=R_S+\varepsilon \cos(n\theta)},
\end{equation}
where, by \re{hexpand2},
\begin{equation}\label{exp1}
    \begin{split}
        &\nabla \widetilde{p} \cdot \vec{n}\Big|_{r=R_S+\varepsilon \cos(n\theta)}\\
        =&\; \Big( \frac{\p \widetilde{p}}{\p r} \vec{e}_r +  \frac{\vec{e}_\theta}{R_S+\varepsilon\cos(n\theta)}\frac{\p \widetilde{p}}{\p \theta}\Big)\cdot \frac{1}{\sqrt{1+\big(\frac{-n\varepsilon\sin(n\theta)}{R_S+\varepsilon\cos(n\theta)}\big)^2}}\Big(\vec{e}_r - \frac{-n\varepsilon\sin(n\theta)\vec{e}_\theta}{R_S+\varepsilon\cos(n\theta)}\Big)\\
        =&\; \frac{\p \widetilde{p}_S(R_S)}{\p r} + \varepsilon\Big(\frac{\p^2 \widetilde{p}_S(R_S)}{\p r^2}\cos(n\theta) + \frac{\p \widetilde{p}_1(R_S,\theta)}{\p r}\Big) + \varepsilon^2\Big(\frac12\frac{\p^3 \widetilde{p}_S(R_S)}{\p r^3}\cos^2(n\theta)\\
        &+\frac{\p^2 \widetilde{p}_1(R_S,\theta)}{\p r^2}\cos(n\theta) + \frac{1}{R_S^2}\frac{\p \widetilde{p}_S(R_S,\theta)}{\p \theta}n\sin(n\theta)\Big) +\frac{\p \widetilde{p}_2(R_S,\theta)}{\p r}  + O(\varepsilon^3),
    \end{split}
\end{equation}
and, by \re{hexpand3},
\begin{equation}\label{exp2}
    \begin{split}
        &\nabla p^* \hspace{-3pt} \cdot \vec{n}\Big|_{r=R_S+\varepsilon \cos(n\theta)} \\
        =&\; \Big( \frac{\p p^*}{\p r} \vec{e}_r +  \frac{\vec{e}_\theta}{R_S+\varepsilon\cos(n\theta)}\frac{\p p^*}{\p \theta}\Big)\cdot \frac{1}{\sqrt{1+\big(\frac{-n\varepsilon\sin(n\theta)}{R_S+\varepsilon\cos(n\theta)}\big)^2}}\Big(\vec{e}_r - \frac{-n\varepsilon\sin(n\theta)\vec{e}_\theta}{R_S+\varepsilon\cos(n\theta)}\Big)\\
        =&\; \frac{\p p^*_S(R_S)}{\p r} + \varepsilon\Big(\frac{\p^2 p^*_S(R_S)}{\p r^2}\cos(n\theta) + \frac{\p p^*_1(R_S,\theta)}{\p r}\Big) + \varepsilon^2\Big(\frac12\frac{\p^3 p^*_S(R_S)}{\p r^3}\cos^2(n\theta)\\
        &+\frac{\p^2 p^*_1(R_S,\theta)}{\p r^2}\cos(n\theta) + \frac{1}{R_S^2}\frac{\p p^*_1(R_S,\theta)}{\p \theta}n\sin(n\theta) +\frac{\p p^*_2(R_S,\theta)}{\p r} \Big) + O(\varepsilon^3).
    \end{split}
\end{equation}
Hence,
\begin{equation}\label{F2-1}
\begin{split}
    &\mathcal{F}(\mu,\varepsilon \cos(n\theta))\\ =& \; \Big(\frac{\p \widetilde{p}_S(R_S)}{\p r} + \mu \frac{\p p^*_S(R_S)}{\p r}\Big) + \varepsilon\Big(\frac{\p^2 \widetilde{p}_S(R_S)}{\p r^2}\cos(n\theta) + \mu\frac{\p^2 p^*_S(R_S)}{\p r^2}\cos(n\theta)\\
    &+\frac{\p \widetilde{p}_1(R_S,\theta)}{\p r} + \mu \frac{\p p^*_1(R_S,\theta)}{\p r} \Big) + \varepsilon^2 \Big(\frac12\frac{\p^3 \widetilde{p}_S(R_S)}{\p r^3}\cos^2(n\theta) + \frac{\mu}2\frac{\p^3 p^*_S(R_S)}{\p r^3}\cos^2(n\theta)\\
    &+\frac{\p^2 \widetilde{p}_1(R_S,\theta)}{\p r^2}\cos(n\theta)+ \mu\frac{\p^2 p^*_1(R_S,\theta)}{\p r^2}\cos(n\theta) + \frac{1}{R_S^2}\frac{\p \widetilde{p}_1(R_S,\theta)}{\p \theta}n\sin(n\theta) \\
    &+\frac{\mu}{R_S^2}\frac{\p p^*_1(R_S,\theta)}{\p \theta}n\sin(n\theta) +\frac{\p \widetilde{p}_2(R_S,\theta)}{\p r} + \mu \frac{\p p^*_2(R_S,\theta)}{\p r}\Big) + O(\varepsilon^3).
\end{split}
\end{equation}
On the other hand, since $\mathcal{F}(\mu,0)=0$, Taylor series expansion gives
\begin{equation}
    \label{F2-2}
    \mathcal{F}(\mu,\varepsilon\cos(n\theta)) = \varepsilon \mathcal{F}_{\widetilde{R}}(\mu,0)[\cos(n\theta)] + \frac{\varepsilon^2 }2 \mathcal{F}_{\widetilde{R}\widetilde{R}}(\mu,0)[\cos(n\theta),\cos(n\theta)] + O(\varepsilon^3).
\end{equation}
Comparing the $\varepsilon^2$-order terms in \re{F2-1} and \re{F2-2}, we obtain
\begin{equation}
    \label{F2}
    \begin{split}
    &\mathcal{F}_{\widetilde{R}\widetilde{R}}(\mu,0)[\cos(n\theta),\cos(n\theta)]\\
    =&\; \frac{\p^3 \widetilde{p}_S(R_S)}{\p r^3}\cos^2(n\theta) +  \mu \frac{\p^3 p^*_S(R_S)}{\p r^3}\cos^2(n\theta) +2\frac{\p^2 \widetilde{p}_1(R_S,\theta)}{\p r^2}\cos(n\theta)\\
    &+ 2\mu\frac{\p^2 p^*_1(R_S,\theta)}{\p r^2}\cos(n\theta) + \frac{2}{R_S^2}\frac{\p \widetilde{p}_1(R_S,\theta)}{\p \theta}n\sin(n\theta) \\
    &+ \frac{2\mu}{R_S^2}\frac{\p p^*_1(R_S,\theta)}{\p \theta}n\sin(n\theta) + 2\frac{\p \widetilde{p}_2(R_S,\theta)}{\p r} + 2\mu \frac{\p p^*_2(R_S,\theta)}{\p r}.
    \end{split}
\end{equation}

In \re{mu-d}, $l\in Y^*$ satisfies $\text{Ker}\, l = \text{Im}\, \mathcal{F}_{\widetilde{R}}(\mu_n,0)$, where $\mu_n$ ($n\ge2$) are the bifurcation points derived in Section 3. By \re{Im} and Remark \ref{main2}, for every integer $n\ge2$,
$$\text{Im}\, \mathcal{F}_{\widetilde{R}}(\mu_n,0) = \text{Span}\{1,\cos(\theta),\cos(2\theta),\cdots,\cos((n-1)\theta),\cos((n+1)\theta),\cdots\}.$$
Therefore, we can take $l(s)=\int_0^{2\pi}\cos(n\theta)s\dif \theta$ which satisfies the requirement. Next, we proceed to compute the numerator and the denominator in formula \re{mu-d}.

We start by computing the denominator in \re{mu-d}. Using \re{F-mu-r} and recalling the sign of $M_n$ in \re{Mn-sign}, we have
\begin{equation}\label{denominator}
\begin{split}
\langle l,\mathcal{F}_{\mu\widetilde{R}}(\mu_n,0) [\cos(n\theta)]\rangle = \int_0^{2\pi}
    -M_n \cos^2(n\theta) \dif \theta = -M_n \pi < 0.
    \end{split}
\end{equation}

For the numerator in \re{mu-d}, we first use \re{sol:p1t} and \re{sol:p1s} to compute the partial derivatives with respect to $\theta$:
\begin{gather}
    \label{deri:pt1:t} \frac{\p \widetilde{p}_1(R_S,\theta)}{\p \theta} = \widetilde{p}_{1,n}(R_S)\big(-n\sin(n\theta)\big) = -\frac{n(n^2-1)}{R_S^2}\sin(n\theta),\\
    \label{deri:ps1:t} \frac{\p p^*_1(R_S,\theta)}{\p \theta} = p^*_{1,n}(R_S)\big(-n\sin(n\theta)\big) = \Big(-\frac{I_1(R_S)}{I_0(R_S)}+\frac{I_1(R_S)}{I_0(R_S)}\Big)\big(-n\sin(n\theta)\big)  = 0;
\end{gather}
and also the second-order $r$-derivative of $\widetilde{p}_1$ and $p^*_1$:
\begin{gather}
    \label{deri2:pt1} \frac{\p^2 \widetilde{p}_1(R_S,\theta)}{\p r^2} = \frac{\p^2 \widetilde{p}_{1,n}(R_S)}{\p r^2}\cos(n\theta) = \frac{n(n-1)(n^2-1)}{R_S^4}\cos(n\theta)\\
    \label{deri2:ps1} \frac{\p^2 p^*_1(R_S,\theta)}{\p r^2} = \frac{\p^2 p^*_{1,n}(R_S)}{\p r^2} \cos(n\theta) = \Big(\frac{I_1(R_S)}{I_0(R_S)} - \frac{I_1(R_S) I_{n+1}(R_S)}{R_S I_0(R_S)I_n(R_S)}\Big)\cos(n\theta).
\end{gather}
Now, substituting \re{dpt}, \re{dps3}, \re{d:p2}, \re{deri:p2s}, and \re{deri:pt1:t} -- \re{deri2:ps1} all into \re{F2} and using the double-angle formulas, we obtain
\begin{equation}\label{FRR}
    \begin{split}
        &\mathcal{F}_{\widetilde{R}\widetilde{R}}(\mu,0)[\cos(n\theta),\cos(n\theta)]\\
        =&\; \mu\Big(\frac{1}{R_S} - \frac{(2+R_S^2)I_1(R_S)}{R_S^2 I_0(R_S)}\Big)\frac{1+\cos(2n\theta)}{2} + \frac{2n(n-1)(n^2-1)}{R_S^4}\frac{1+\cos(2n\theta)}{2} \\
        &+ 2\mu \Big(\frac{I_1(R_S)}{I_0(R_S)} - \frac{I_1(R_S) I_{n+1}(R_S)}{R_S I_0(R_S)I_n(R_S)}\Big) \frac{1+\cos(2n\theta)}{2}  -\frac{2n^2(n^2-1)}{R_S^4}\frac{1-\cos(2n\theta)}{2} \\
        &+ \frac{2n-5n^2-2n^2(n^2-1)}{R_S^4}\cos(2n\theta) + \mu \frac{n(2n-1)I_1(R_S)}{R_S^2 I_0(R_S)}\cos(2n\theta) \\
        &- \mu \widetilde{M}_n \frac{I_1(R_S)}{I_0(R_S)}
        - \mu \widetilde{M}_n \Big(\frac{I_{2n+1}(R_S)}{I_{2n}(R_S)} + \frac{2n }{R_S }\Big)\cos(2n\theta)\\
        =&\; \big(\text{I} + \mu \text{II}\big) + \big(\text{III} + \mu \text{IV}\big)\cos(2n\theta),
    \end{split}
\end{equation}
where
\begin{equation*}
\begin{split}
    \text{I} =& -\frac{n(n^2-1)}{R_S^4}, \hspace{2em} \text{III} = -\frac{n(n^2-1)}{R_S^4} + \frac{2n-5n^2}{R_S^4},\\
    \text{II} =&\; \frac{1}{R_S}\Big(\frac12 - \frac{I_1(R_S)}{R_SI_0(R_S)}-\frac{I_1(R_S)I_{n+1}(R_S)}{I_0(R_S)I_n(R_S)}\Big) + \frac{I_1(R_S)}{I_0(R_S)}\Big(\frac12 - \widetilde{M}_n\Big)\\
    =&\; \frac{1}{R_S}\Big(M_n - \frac12 + \frac
    {I_1(R_S)}{R_S I_0(R_S)}\Big) + \frac{I_1(R_S)}{I_0(R_S)}\Big(\frac12 - \widetilde{M}_n\Big),
\end{split}
\end{equation*}
and
\begin{equation*}
\begin{aligned}
    \text{IV} =&\; \frac{1}{R_S}\Big(\frac12 - \frac{I_1(R_S)}{R_SI_0(R_S)}-\frac{I_1(R_S)I_{n+1}(R_S)}{I_0(R_S)I_n(R_S)}\Big) + \frac{I_1(R_S)}{I_0(R_S)}\Big(\frac12 + \frac{n(2n-1)}{R_S^2}\Big) \\
    &\;- \widetilde{M}_n \Big(\frac{I_{2n+1}(R_S)}{I_{2n}(R_S)} + \frac{2n }{R_S }\Big)\\
    =&\;\frac{1}{R_S}\Big(M_n - \frac12 + \frac
    {I_1(R_S)}{R_S I_0(R_S)}\Big) + \frac{I_1(R_S)}{I_0(R_S)}\Big(\frac12 + \frac{n(2n-1)}{R_S^2}\Big) \\
    &\;- \widetilde{M}_n \Big(\frac{I_{2n+1}(R_S)}{I_{2n}(R_S)} + \frac{2n }{R_S }\Big).
\end{aligned}
\end{equation*}
When $\mu=\mu_n$, it follows from \re{Mntilde}, \re{thm:mun}, and \re{RS} that
$$\mu_n M_n = \frac{n(n^2-1)}{R_S^3}$$
and
$$\mu_n \widetilde{M}_n = \frac{\mu_n}{2} + \mu_n\frac{2n-3}{2}\frac{I_1(R_S)}{R_SI_0(R_S)} - \mu_n M_n = \frac{\mu_n}{2} + \frac{(2n-3)\mu_n \widetilde{\sigma}}{4} -\frac{n(n^2-1)}{R_S^3}.$$
Hence, we obtain
\begin{equation*}
\begin{aligned}
    \text{I}+\mu_n \text{II} =\frac{\mu_n}{R_S}\Big(-\frac12+\frac{\widetilde{\sigma}}2\Big) + \mu_n\frac{I_1(R_S)}{I_0(R_S)}\Big(\frac12 - \widetilde{M}_n\Big),
\end{aligned}
\end{equation*}
and
\begin{equation*}
\begin{aligned}
    \text{III}+\mu_n \text{IV} =&\; \frac{2n-5n^2}{R_S^4}+ \frac{\mu_n}{R_S}\Big(-\frac12+\frac{\widetilde{\sigma}}2\Big) + \mu_n\frac{I_1(R_S)}{I_0(R_S)}\Big(\frac12 + \frac{n(2n-1)}{R_S^2}\Big) \\
    &\;- \mu_n \widetilde{M}_n \Big(\frac{I_{2n+1}(R_S)}{I_{2n}(R_S)} + \frac{2n }{R_S }\Big).
\end{aligned}
\end{equation*}
Note that we can express 1 and $\cos(2n\theta)$ in terms of $\cos(n\theta)$ and $\sin(n\theta)$. More specifically, Equation \re{FRR} can be rewritten as
\begin{equation*}
\begin{split}
        &\mathcal{F}_{\widetilde{R}\widetilde{R}}(\mu_n,0)[\cos(n\theta),\cos(n\theta)]\\
        =&\; \big(\text{I} + \mu_n \text{II}\big)\big(\cos^2(n\theta) + \sin^2(n\theta)\big) + \big(\text{III} + \mu_n \text{IV}\big)\big(\cos^2(n\theta) - \sin^2(n\theta)\big)\\
        =&\; \big(\text{I}+\mu_n \text{II} + \text{III}+\mu_n \text{IV}\big)\cos^2(n\theta) + \big(\text{I}+\mu_n \text{II} - \text{III}-\mu_n \text{IV}\big)\sin^2(n\theta)\\
        =&\;\Big[\frac{2n-5n^2}{R_S^4} + \frac{\mu_n}{R_S}\big(\widetilde{\sigma}-1\big) +  \mu_n \frac{I_1(R_S)}{I_0(R_S)}\Big(1-\widetilde{M}_n + \frac{n(2n-1)}{R_S^2}\Big)-\mu_n \widetilde{M}_n \Big(\frac{I_{2n+1}(R_S)}{I_{2n}(R_S)} \\
        &\;+ \frac{2n }{R_S }\Big) \Big]\cos^2(n\theta) + \Big[\frac{5n^2-2n}{R_S^4} + \mu_n\frac{I_1(R_S)}{I_0(R_S)}\Big(-\widetilde{M}_n - \frac{n(2n-1)}{R_S^2}\Big) + \mu_n \widetilde{M}_n \Big(\frac{I_{2n+1}(R_S)}{I_{2n}(R_S)}\\
        &\;+ \frac{2n }{R_S }\Big)\Big]\frac{1}{n^2}\Big(\frac{\dif \cos(n\theta)}{\dif \theta}\Big)^2.
        \end{split}
\end{equation*}
This demonstrates that $\mathcal{F}_{\widetilde{R}\widetilde{R}}(\mu_n,0)[\cos(n\theta), \cos(n\theta)]$ is a bilinear function, which facilitates its use in computing $\mathcal{F}_{\widetilde{R}\widetilde{R}}(\mu_n,0)[\cos(n\theta),\phi]$ in \re{eqn:mu-d2}. However, for computational simplicity, it is more practical to directly use Equation \eqref{FRR} when calculating the numerator in \eqref{mu-d}. We notice that
\begin{equation}\label{eqn:F2}
\begin{aligned}
     &\mathcal{F}_{\widetilde{R}\widetilde{R}}(\mu_n,0)[\cos(n\theta),\cos(n\theta)]\\
     =&\; \big(\text{I} + \mu_n \text{II}\big) + \big(\text{III} + \mu_n \text{IV}\big)\cos(2n\theta)\\
     =&\; \Big[\frac{\mu_n}{R_S}\Big(-\frac12+\frac{\widetilde{\sigma}}2\Big) + \mu_n\frac{I_1(R_S)}{I_0(R_S)}\Big(\frac12 - \widetilde{M}_n\Big)\Big] + \Big[\frac{2n-5n^2}{R_S^4}+ \frac{\mu_n}{R_S}\Big(-\frac12+\frac{\widetilde{\sigma}}2\Big)\\
     &\;+ \mu_n\frac{I_1(R_S)}{I_0(R_S)}\Big(\frac12 + \frac{n(2n-1)}{R_S^2}\Big) - \mu_n \widetilde{M}_n \Big(\frac{I_{2n+1}(R_S)}{I_{2n}(R_S)} + \frac{2n }{R_S }\Big) \Big]\cos(2n\theta)\\
    =&\;\Lambda_1 + \Lambda_2 \cos(2n\theta)
    \end{aligned}
\end{equation}
is a linear combination of 1 and $\cos(2n\theta)$. Since $\cos(n\theta)$ is orthogonal to both 1 and $\cos(2n\theta)$, it yields
\begin{equation}
    \label{num}\langle l,\mathcal{F}_{\widetilde{R}\widetilde{R}}(\mu_n,0)[\cos(n\theta),\cos(n\theta)]\rangle = \Lambda_1\int_0^{2\pi} \cos(n\theta)\dif \theta + \Lambda_2\int_0^{2\pi} \cos(n\theta)\cos(2n\theta)\dif \theta = 0.
\end{equation}

Therefore, by combining \re{denominator} and \re{num} and applying Theorem \ref{shi}, we find that $\mu_n'(0)=0$ for all $n\ge 2$. Here, $(\mu_n(\varepsilon), \widetilde{R}n(\varepsilon))$ represent the bifurcation solutions obtained in Theorem \ref{main1}. We summarize our findings in the following theorem:
\begin{thm}
Let $\Gamma_{2,n}=\{(\mu_n(\varepsilon),\sigma_n(\varepsilon,\cdot),\widetilde{p}_n(\varepsilon,\cdot),p^*_n(\varepsilon,\cdot),\partial \Omega_{\varepsilon,n}): |\varepsilon|\ll 1\}$ be the solution branches for System $(\mathcal{A})$ obtained in Theorem \ref{main1}, where $n\ge 2$ is an integer. Then, $\mu_n'(0)=0$, and the bifurcation at $(\mu_n,0)$ is a pitchfork bifurcation.
\end{thm}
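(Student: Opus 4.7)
The plan is to apply Theorem \ref{shi} directly: if we can show that the numerator $\langle l, \mathcal{F}_{\widetilde{R}\widetilde{R}}(\mu_n,0)[\cos(n\theta),\cos(n\theta)]\rangle$ in formula \eqref{mu-d} vanishes, then $\mu_n'(0)=0$ and the bifurcation is pitchfork. The denominator is already handled: since $\text{Im}\,\mathcal{F}_{\widetilde{R}}(\mu_n,0)$ is spanned by $\{\cos(k\theta):k\neq n\}$, the natural choice is the functional $l(s)=\int_0^{2\pi} s(\theta)\cos(n\theta)\,d\theta$, and by \eqref{F-mu-r} together with \eqref{Mn-sign} one has $\langle l,\mathcal{F}_{\mu\widetilde{R}}(\mu_n,0)[\cos(n\theta)]\rangle=-M_n\pi\neq 0$.

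The main work is the computation of $\mathcal{F}_{\widetilde{R}\widetilde{R}}(\mu_n,0)[\cos(n\theta),\cos(n\theta)]$ through an $\varepsilon^2$-order expansion \eqref{hexpand1}--\eqref{hexpand3} with $S(\theta)=\cos(n\theta)$. First I would refine the mean curvature expansion to order $\varepsilon^2$, giving the formula in Lemma \ref{kappalem}. Next, substituting the expansions into System $(\mathcal{A})$ and collecting $\varepsilon^2$-order terms yields linear boundary value problems \eqref{eqn:sigma2}, \eqref{eqn:p2t}, \eqref{eqn:p2s} for $\sigma_2,\widetilde{p}_2,p^*_2$ whose forcing terms are quadratic in $S$ and $S_\theta,S_{\theta\theta}$. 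Applying the double-angle formulas to $\cos^2(n\theta)$, $SS_{\theta\theta}=-n^2\cos^2(n\theta)$, and $S_\theta^2=n^2\sin^2(n\theta)$, all boundary data reduce to linear combinations of $1$ and $\cos(2n\theta)$. I would solve these explicitly using the eigenfunction bases: $I_0(r)$ and $I_{2n}(r)\cos(2n\theta)$ for the modified Helmholtz equation, and $1$ and $r^{2n}\cos(2n\theta)$ for Laplace's equation. To rigorously justify the expansions, I would prove an $O(\varepsilon^3)$ remainder estimate by applying Schauder theory to the differences $\Psi_1=\sigma-\sigma_S-\varepsilon\sigma_1-\varepsilon^2\sigma_2$ and the analogous quantity for the pressure, as in Lemma \ref{est2}.

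Once the explicit solutions are assembled, I would expand $\mathcal{F}(\mu_n,\varepsilon\cos(n\theta)) = \bigl(\nabla\widetilde{p}+\mu_n\nabla p^*\bigr)\cdot\vec{n}$ at $r=R_S+\varepsilon\cos(n\theta)$ in powers of $\varepsilon$ and read off the $\varepsilon^2$-coefficient. The decisive structural observation, which can be seen \emph{a priori} without evaluating any coefficient, is that every term entering $\mathcal{F}_{\widetilde{R}\widetilde{R}}(\mu_n,0)[\cos(n\theta),\cos(n\theta)]$ is either mode $0$ or mode $2n$ in $\theta$: the Taylor-series contributions involve $\cos^2(n\theta)$, the derivatives $\partial_\theta\widetilde{p}_1,\partial_\theta p^*_1$ contribute $\sin(n\theta)\cdot n\sin(n\theta)=n^2\sin^2(n\theta)$, and the boundary data of $\sigma_2,\widetilde{p}_2,p^*_2$ were just shown to contain only modes $0$ and $2n$. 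Hence $\mathcal{F}_{\widetilde{R}\widetilde{R}}(\mu_n,0)[\cos(n\theta),\cos(n\theta)]=\Lambda_1+\Lambda_2\cos(2n\theta)$ for some constants $\Lambda_1,\Lambda_2$, both orthogonal to $\cos(n\theta)$ on $[0,2\pi]$, so the numerator vanishes and $\mu_n'(0)=0$. The main obstacle is the book-keeping in the second-order expansion of $\partial p/\partial n$ at the perturbed boundary: keeping track of the curvature correction, the Taylor expansion of the radial derivative, and the angular derivative $(1/r^2)\partial_\theta p \cdot (-r_\theta)$, all of which genuinely contribute at order $\varepsilon^2$; the parity argument guarantees their cancellation against $\cos(n\theta)$ but the computation must be set up carefully to see it.
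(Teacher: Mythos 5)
Your proposal is correct and follows essentially the same route as the paper: an $\varepsilon^2$-expansion of $(\sigma,\widetilde{p},p^*)$, the refined curvature formula of Lemma~\ref{kappalem}, the explicit solutions of the second-order systems \eqref{eqn:sigma2}, \eqref{eqn:p2t}, \eqref{eqn:p2s} in Fourier modes $0$ and $2n$, the Schauder justification of Lemma~\ref{est2}, and then the orthogonality of $\cos(n\theta)$ against $\{1,\cos(2n\theta)\}$ to kill the numerator in \eqref{mu-d}. Your a~priori parity observation is exactly the structural reason the paper's explicit computation of $\Lambda_1,\Lambda_2$ turns out to be unnecessary for concluding $\mu_n'(0)=0$, though the paper records those coefficients anyway with an eye toward the $\mu''(0)$ computation via \eqref{eqn:mu-d2}.
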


\section{Discussion}
The bifurcation theory has been extensively developed for ODEs and PDEs within fixed domains. However, its application to free boundary problems remains largely unexplored. Although most studies on free boundary tumor growth models have established the existence of symmetry-breaking bifurcations, they have not investigated the structure of these bifurcations. One of the primary challenges is the nonlinear decoupling of the free boundary, which makes the calculation of $\mu'(0)$ in \re{mu-d} complicated.

In this paper, we first applied the Crandall-Rabinowitz Theorem to demonstrate the existence of bifurcations. It is important to note that, in the first requirement of the Crandall-Rabinowitz Theorem, the special solution should be independent of the bifurcation parameter. In our model, the radially symmetric stationary solution $p_S$ depends on the bifurcation parameter $\mu$. To address this dependency, we introduced a decomposition of $p$ as $p=\widetilde{p}+\mu p^*$ and refined the proof methodologies found in \cite{friedman2008stability,angio1,wang2014bifurcation,zhao2020symmetry}. Moreover, we expanded the solution $(\sigma,\widetilde{p},p^*)$ to $O(\varepsilon^2)$ order and showed that $\mu'(0)=0$ at each bifurcation point. It indicates that all symmetry-breaking bifurcations established by the Crandall-Rabinowitz Theorem are pitchfork bifurcations.

We employed the numerical method described in \cite{hao2011three} to compute bifurcation solutions within System $(\mathcal{A})$. The discretization was set up with $N_R=20$ and $N_\theta=32$, and we focused on computing bifurcation solutions near the bifurcation point $\mu =\mu_2 \approx 8.6445$. For small values of $\varepsilon$, we observed two distinct bifurcation solutions: one corresponding to a positive $\varepsilon$ and the other to a negative $\varepsilon$. Figure \ref{fig:bifurcation} exhibits contour plots of the boundaries for these two bifurcation solutions. Pitchfork bifurcations typically occur in systems with symmetry and exhibit symmetry in the bifurcation diagram. The presence of two bifurcation solutions around the same bifurcation point suggests a pitchfork bifurcation, which is consistent with our theoretical findings.

\begin{figure}[H]
    \centering
    \includegraphics[width=0.8\linewidth]{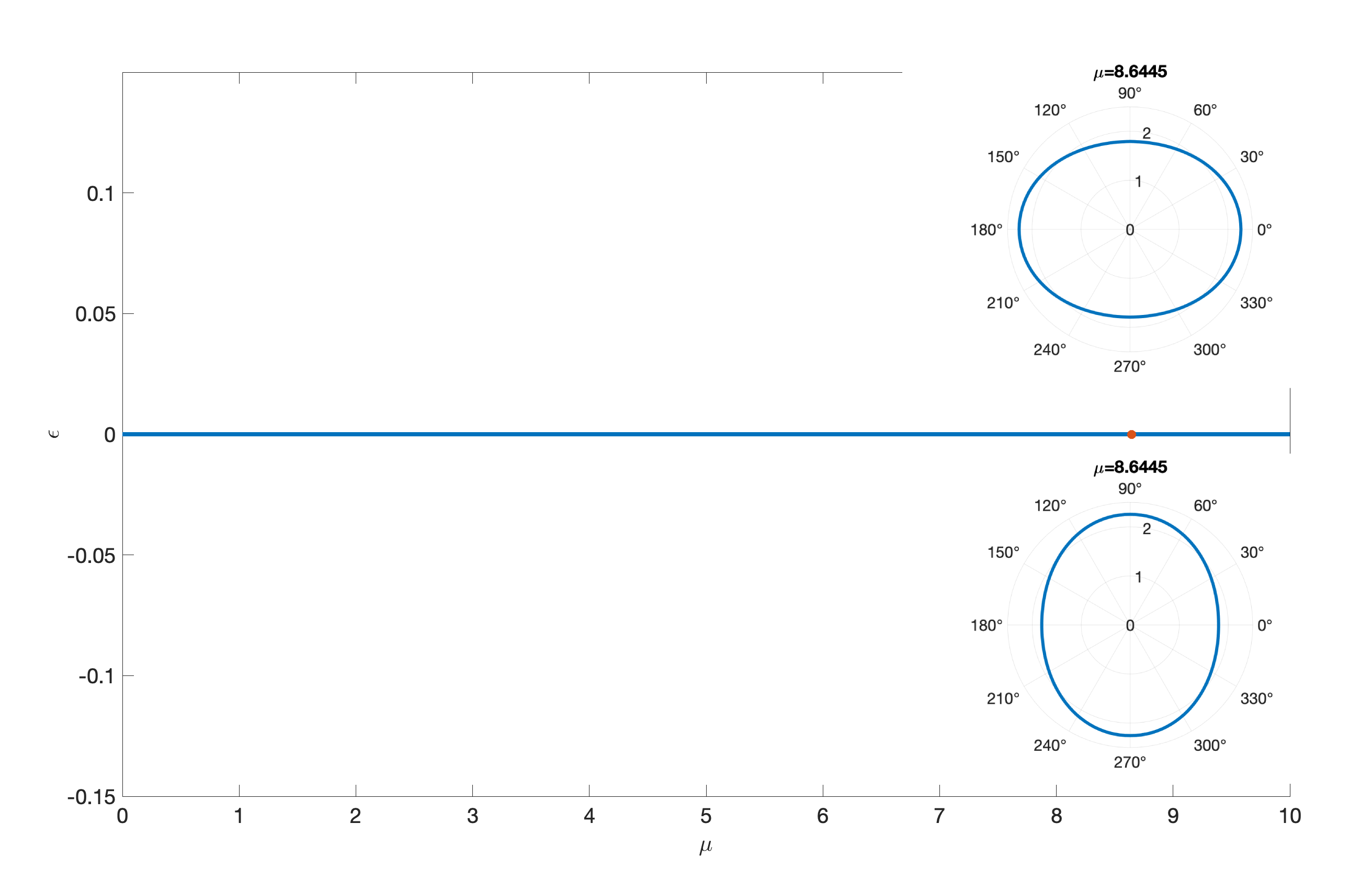}
    \caption{Contour plots of the boundaries for the two bifurcating solutions near $\mu=8.6445$, where $R_S=2$.}
    \label{fig:bifurcation}
\end{figure}

To further determine the direction of the pitchfork bifurcations, it is necessary to proceed to the next order, $O(\varepsilon^3)$, and determine the sign of $\mu''(0)$ using \eqref{eqn:mu-d2}. This task involves more complex and intricate calculations, so we leave it for our future investigation.

  \bibliographystyle{elsarticle-num-names}

\end{document}